\newtheorem{defi}{\bf D\scriptsize EFINITION \normalsize}
\newtheorem{theorem}{\bf T\scriptsize HEOREM \normalsize}
\newtheorem{lm}{\bf L\scriptsize EMMA \normalsize}
\newtheorem{dk}{\bf C\scriptsize OROLLARY \normalsize}
\newtheorem{conj}{\bf C\scriptsize ONJECTURE \normalsize}
\newtheorem{rem}{\bf R\scriptsize EMARK \normalsize}
\newtheorem{exa}{\bf E\scriptsize XAMPLE \normalsize}
\newtheorem{pro}{\bf P\scriptsize ROBLEM \normalsize}
\newtheorem{prop}{\bf P\scriptsize ROPOSITION \normalsize}
\newtheorem{no}{\bf N\scriptsize OTE \normalsize}
\newenvironment{remark}{\begin{rem}\rm}{\end{rem}}
\def\kopr{\hfill\raisebox{3pt}{\framebox{$\star$}}}
\newenvironment{example}{\begin{exa}\rm}{$\kopr$\end{exa}}
\newenvironment{definition}{\begin{defi}\rm}{\end{defi}}
\newenvironment{corollary}{\begin{dk}\it}{\end{dk}}
\newenvironment{conjecture}{\begin{conj}\it}{\end{conj}}
\newenvironment{proposition}{\begin{prop}\it}{\end{prop}}
\newcommand{\nadsebou}[2]{\begin{array}{c} #1 \\ #2 \end{array}}
\newcommand{\zav}[1]{\left( #1 \right)}
\newcommand{\szav}[1]{\left\{ #1 \right\}}
\newcommand{\imag}{{\rm i}}
\newcommand{\serovna}[1]{\stackrel{(\ref{#1})}{=}}
\newcommand{\abs}[1]{\left| #1 \right|}
\newcommand{\hypop}{\operatorname*{\mathcal{H}}}
\renewcommand\Re{\operatorname{Re}}
\newcommand{\N}{\mathbb{N}}
\newcommand{\C}{{\mathbb{C}}}
\newcommand{\dd}{{\rm d}}
\newcommand{\ii}{{\rm i}}
\numberwithin{equation}{section}
\begin{document}
\title{Towards a change of variable formula for ``hypergeometrization''}
 \author{Petr Blaschke}
 %\thanks{Supported by GA \v CR grant no. 201/12/G028}
\address{ Mathematical Institute, Silesian University in Opava, Na Rybnicku 1, 746 01 Opava, Czech Republic}

\email{Petr.Blaschke@math.slu.cz}
\begin{abstract} 
We are going to study properties of ``hypergeometrization'' -- an operator which act on analytic functions near the origin by inserting two Pochhammer symbols into their Taylor series. In essence, this operator maps elementary function into hypergeometric. The main goal is to produce number of ``change of variable'' formulas for this operator which, in turn, can be used to derive great number of transform for multivariate hypergeometric functions.
\end{abstract}
\maketitle
\section{Introduction}
Hypergeometric functions and their multivariate analogs are well studied objects in mathematics. The classical references include \'Erdelyi \cite{Erdelyi}, Luke \cite{Luke}, Bailey \cite{Bailey}, Slater \cite{Slater} just to mention few. A very nice survey article about multivariate hypergeometric function of ``Appell's type'' was written by M. Schlosser in \cite{Schlosser}.

There are numerous ways how to extend hypergeometric function into higher dimension. There are Appell's function \cite{Appell}. Functions from the Horn's list \cite{Horn}, Kampé de Feriét functions \cite{Kampe,Appell2}, Lauricella functions \cite{Lauricella}, Srivastava function \cite{Karlsson}, Saran's functions \cite{Saran,Saran2}, 
$A$-hypergeomtric function \cite{Gelfand, Beukers,Beukers2}, hypergeometric functions of matrix argument \cite{macdonald, Blaschke5}, and so on.

These functions appears surprisingly often in all of analysis and have many application, e.g. in quantum field theory, in computing of Feynman integrals (see e.g. \cite{Shpot}), even appear also in chemistry \cite{dlmf8}. Recently a Karlsson's $FD_1$ function  \cite{Karlsson,Exton1,Exton2} appeared in the literature \cite{englis} in the context of harmonic Bergman spaces. 
   
The main object of study for these functions are various ``transforms'' i.e. identities that relates two of them together or one function to itself but with different values of parameters and/or argument(s).

A common feature of all of the mentioned functions (safe for functions of matrix argument) is the presence of a Pochhammer symbol, i.e. the quantity $(a)_k:=a(a+1)\cdots (a+k-1)$ in their series expansion.

It is therefore only natural to study a linear operator $\hypop_c^a$ called ``hypergeometrization'' depending on two complex parameters $a,c\in\mathbb{C}$ which acts on analytic functions near the origin by inserting two Pochhammer symbols into their Taylor series.
\begin{definition}
Let $C^{\omega}$ denotes a space of functions analytic near the origin, i.e.
$$
f\in C^{\omega}\qquad \Leftrightarrow \qquad \exists R>0:\quad f(x)=\sum_{n=0}^{\infty} f_n x^{n},\qquad \forall \abs{x}<R, 
$$
for some complex coefficients $f_n$.

Let $a,c\in\mathbb{C}$, so that $1-c\not\in\mathbb{N}$.
Then the \textit{hypergeometrization} is the linear operator
$$
\hypop_{c}^{a}: C^\omega \to C^\omega, 
$$ 
given by
\begin{equation}\label{naivehyp}
\hypop_{c}^a f\zav{x}:=\sum_{n=0}^\infty f_n\frac{(a)_n}{(c)_n}x^{n},
\end{equation}
where $(a)_n=a(a+1)\cdots (a+n-1)$ is the Pochhammer symbol.
\end{definition}
%\begin{remark}
%We \textit{intentionally} do not try to characterize values of  $c$ for which the series in (\ref{naivehyp}) ``makes sense'' by which we means that all the coefficients are well defined and the series converges in some neighborhood of zero. It is true that when $1-c\not\in\mathbb{N}$ the series in (\ref{naivehyp}) \textit{certainly} exists with the same region of convergence.
%
%But for a polynomial $p$ of degree $k<m$ the following function 
%$$
%\hypop_{-m}^{a}p(x)
%$$
%\textit{does} exists.
%\end{remark}
\begin{remark}
Most of the time we will make hypergeometrization with respect to the $x$ variable, or with respect to a variable which is clear from context. However, in case there is a need to stress the variable in use, we will write it in brackets like so:
$$
\hypop_{c}^a \equiv \hypop_{c}^a(x).
$$  
\end{remark}

Application of operator $\hypop_{c}^{a}$ on elementary functions can produce large number of special functions, particularly (as the name suggests) hypergeometric functions. Concretely, Gauss's hypergeometric function is trivially given by
\begin{align}
\label{2F1}\hypop_{c}^{a} (1-x)^{-b}&=\!\! \ _2 F_1\zav{\nadsebou{a\quad b}{c};x}.\\
\intertext{Similarly, we have an expression for the confluent hypergeometric function}
\label{1F1}\hypop_{c}^{a} e^x&=\!\! \ _1 F_1\zav{\nadsebou{a}{c};x},\\
\intertext{and Bessel's function}
\label{Bessel}\hypop_{c}^{\frac12} \cos(2\sqrt{x})&= \!\! \ _0 F_1\zav{\nadsebou{-}{c};-x}=\Gamma(c)x^{\frac{1-c}{2}} J_{c-1}(2\sqrt{x}).
\end{align}
In fact, as we will see in Proposition \ref{pfqrepr}, all the generalized hypergeometric functions $\!\! \ _p F_q$ can be constructed from elementary functions (by iterative application of hypergeometrization). We will also show that great number of multivariate analogues of hypergeometric functions are also images of $\hypop_{c}^{a}$. For instance Appell's functions \cite{Appell,Appell2}:
\begin{align}
\hypop_{c}^{a}(t)(1-tx)^{-b_1}(1-ty)^{-b_2}&\stackrel{(\ref{F1})}{=}F_1\zav{\nadsebou{a}{c};\nadsebou{b_1\quad b_2}{-};tx,ty}.\\
\hypop_{c_1}^{b_1}(x)\hypop_{c_2}^{b_2}(y)(1-x-y)^{-a}&\stackrel{(\ref{F2})}{=}F_2\zav{\nadsebou{a}{-};\nadsebou{b_1}{c_1}\nadsebou{b_2}{c_2};x,y}.\\
\hypop_{1}^{a_1}(x)\hypop_{\frac12}^{b_1}(x)\hypop_{1}^{a_2}(y)\hypop_{\frac12}^{b_2}(y)\hypop_{c}^{\frac32}(t)\frac{{\rm arctan}\sqrt{t^2xy-tx-ty}}{\sqrt{t^2xy-tx-ty}}&\stackrel{(\ref{F3})}{=}F_3\zav{\nadsebou{}{c};\nadsebou{a_1\quad b_1}{-}\nadsebou{a_2\quad b_2}{-};tx,ty}.\\
\hypop^{\frac12}_{c}(x)\hypop^{\frac12}_{d}(y)\hypop^{b}_{\frac12}(t)\hypop^{a}_{1}(t)
\frac{1-t(x+y)}{1-2t(x+y)+t^2(x-y)^2}&\stackrel{(\ref{F4})}{=}F_4\zav{\nadsebou{a\quad b}{-};\nadsebou{-}{c}\nadsebou{-}{d};tx,ty}.
\end{align}
But we will also deal with functions from the Horn's list $G_2$, $H_4$, $\Phi_1$, $\Phi_3$, \cite{Erdelyi}. 
\begin{remark}
All the claimed identities in this section can be checked following the link above the equality sign.
\end{remark}

\bigskip

Our main focus is the question whether there exists a ``change of variable formula'' for the operator $\hypop_c^a$. That is, is there a way how to compute hypergeometrization of a composite function in terms hypergeometrization with respect to the inner function? In symbols, we want to produce formulas of the form
$$
\hypop_{c}^a(x) f(y(x))\stackrel{?}{=} F\zav{y, \hypop_{c_j}^{a_j}(y)}f(y),
$$
where $F$ is some non-commutative expression involving $y$ and some finite number of hypergeometrzation operators $\hypop_{c_j}^{a_j}$ with various parameters.   

For some function $y$ the answer is yes. For instance, it is an easy exercise based on properties of the Pochhammer symbol that the following holds:
\begin{align}
&\hypop_c^a (x)\serovna{argscaling}\hypop_c^a (y), &  y&=S_\alpha(x):=\alpha x.\\
&\hypop_c^a (x)\stackrel{(\ref{secondpower})}{=}\hypop_{\frac{c}{2}}^{\frac{a}{2}}(y)\hypop_{\frac{c+1}{2}}^{\frac{a+1}{2}}(y), &  y&=M_2(x):=x^2.\\
&\hypop_c^a (x)\stackrel{(\ref{nthpower})}{=}\hypop_{\frac{c}{n}}^{\frac{a}{n}}(y)\hypop_{\frac{c+1}{n}}^{\frac{a+1}{n}}(y)\cdots \hypop_{\frac{c+n-1}{n}}^{\frac{a+n-1}{n}}(y) , &  y&=M_n(x):=x^n.\\
\intertext{We will show that a change of variable formula holds also for the function $x/(x-1)$
which reads:}
&\hypop^a_c\zav{x}\stackrel{(\ref{Pfaffproperty})}{=}(1-y)^{a}\hypop^a_c\zav{y}(1-y)^{-c}, & y&=P(x):=\frac{x}{x-1}.
\end{align}
The last identity -- which we call ``'Pfaff property'' -- seems to be of fundamental importance. Throughout this article we will show that this single formula is all one need to derive surprisingly large numbers of transform of special function, including:
\begin{align*}
\intertext{Pfaff transform:}
%(1-x)^{-a}&=(1-x)^{-c}\zav{1+\frac{x}{1-x}}^{a-c},\\
% &\Downarrow\ \hypop_{c}^{b}\\
\!\! \ _2 F_1\zav{\nadsebou{a\quad b}{c};x}&\stackrel{(\ref{Pfaff})}{=}(1-x)^{-b}\!\! \ _2 F_1\zav{\nadsebou{c-a\quad b}{c};\frac{x}{x-1}}. \\
\intertext{$F_1$ transform:}
F_1\zav{\nadsebou{a}{c};\nadsebou{b_1\quad b_2}{-};x,y}&\stackrel{(\ref{F1Pfaff})}{=}
(1-x)^{-a}F_1\zav{\nadsebou{a}{c};\nadsebou{c-b_1-b_2\quad b_2}{-};\frac{x}{x-1},\frac{x-y}{x-1}}.\\
\intertext{Quadratic transform:}
%(1-2x)^{-a}&=(1-x)^{-2a}\zav{1-\zav{\frac{x}{1-x}}^2}^{-a}\\
%&\Downarrow\ \hypop_{2a}^{b}\\
\!\! \ _2 F_1\zav{\nadsebou{a\quad b}{2a};2x}&\stackrel{(\ref{2F1q})}{=}(1-x)^{-b}\!\! \ _2 F_1\zav{\nadsebou{\frac{b}{2}\quad \frac{b+1}{2}}{a+\frac12};\zav{\frac{x}{1-x}}^2}.\\
\intertext{$F_1$ to $\!\! \ _3 F_2$ reduction:}
%(1-zx)^{-a}(1-\bar z x)^{-a}&=(1-x)^{-3a}\zav{1+\zav{\frac{x}{1-x}}^3}^{-\alpha}, %\qquad z+\bar z=3,\ z\bar z=3,\\
%&\Downarrow\ \hypop_{3a}^{b}\\
F_1\zav{\nadsebou{b}{3a};\nadsebou{a\quad a}{-};zx,\bar z x}&\stackrel{(\ref{F1to3F2})}{=}(1-x)^{-b}\!\! \ _3 F_2\zav{\nadsebou{a\quad \frac{b}{3}\quad \frac{b+1}{3}\quad \frac{b+2}{3}}{a\quad a+\frac13\quad a+\frac23};\zav{\frac{x}{x-1}}^3},\qquad \nadsebou{z+\bar z=3}{z\bar z=3}.\\
\intertext{$F_2$ to  $\!\!\ _2 F_1$ reduction:}
%(1-x-y)^{-a}&=(1-x)^{-a}(1-y)^{-a}\zav{1-\frac{xy}{(1-x)(1-y)}}^{-a}\\
%&\Downarrow\ \hypop_{b_2}^{a}(y)\hypop_{b_1}^{a}(x),\\
F_2\zav{\nadsebou{a}{-};\nadsebou{b_1}{a}\nadsebou{b_2}{a};x,y}&\stackrel{(\ref{F2to2F1})}{=}(1-x)^{-b_1}(1-y)^{-b_2}\!\! \ _2F_1\zav{\nadsebou{b_1\quad b_2}{a};\frac{xy}{(1-x)(1-y)}}.\\
\intertext{Alternative representations for $F_1$:}
F_1\zav{\nadsebou{a}{c};\nadsebou{b_1\quad b_2}{-};x,y}&\stackrel{(\ref{F1alt})}{=}\hypop_{c-b_2}^{b_1}(x)(1-x)^{-a}\!\! \ _2 F_1\zav{\nadsebou{a\quad b_2}{c};\frac{y-x}{1-x}},\\
%F_2\zav{\nadsebou{a}{-};\nadsebou{b_1}{c_1}\nadsebou{b_2}{c_2};x,y}&\stackrel{(\ref{F2alt})}{=}\hypop_{c_1}^{b_1}(x)(1-x)^{-a}\!\! \ _2 F_1\zav{\nadsebou{a\quad b_2}{c_2};\frac{y}{1-x}}.
\end{align*}
and many more. 
%In fact, as we will see, Pfaff property alone implies change of variable formula for an one parameter group of functions, in the case where parameters $a,c$ differs by an integer. This is the statement of our main results which reads: 
Our main result is to give a change of variable formula valid for a one parameter group of functions.
\begin{theorem}\label{main} Let 
$$
y=F_m(x):=1-(1-x)^m,\qquad m\in\mathbb{Z}.
$$
Then assuming either
$$
1)\qquad m\in \szav{-2,-1,1,2},\ \forall a,c\in \mathbb{C},\qquad \text{or }\qquad 2)\qquad 
\forall m\in\mathbb{Z}\setminus \szav{0},\ a-c\in\mathbb{Z},  
$$
it holds 
\begin{equation}\label{Fnsubs}
\hypop_{c}^{a}(x)=
\zav{\frac{mx}{y}}^{1-c}(1-y)^{1+\frac{c-a}{m}}\zav{\prod_{j=1}^{m}(1-y)^{\frac{a-c-1}{m}}\hypop_{c+(j-1)\frac{a-c}{m}}^{c+j\frac{a-c}{m}}(y)}\zav{\frac{mx}{y}}^{a-1}.
\end{equation}
\end{theorem}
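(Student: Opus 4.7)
The plan is to verify the operator identity (\ref{Fnsubs}) separately in the two regimes of the theorem and patch the pieces together.

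The cases $m = 1$ and $m = -1$ are immediate. For $m = 1$ the prefactors $(mx/y)^{1-c}$ and $(mx/y)^{a-1}$ are both $1$, and $(1-y)^{1+(c-a)/m}$ cancels the lone block $(1-y)^{(a-c-1)/m}$ inside the product, leaving the tautology $\hypop_c^a = \hypop_c^a$. For $m=-1$, using $1-y=1/(1-x)$ and $mx/y=1-x$, substitution into (\ref{Fnsubs}) recovers the Pfaff property (\ref{Pfaffproperty}) exactly.

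For $m = \pm 2$ (the first clause) I would test both sides against the family $f_b(y):=(1-y)^{-b}$, exploiting that $f_b(y(x))=(1-x)^{-mb}$ is again of the same form in $x$. The LHS becomes ${}_2F_1(a,mb;c;x)$ by (\ref{2F1}). For $m=2$ the RHS is a twofold composition of $\hypop$-operators in $y$ interleaved with factors $(1-y)^{(a-c-1)/2}$, sandwiched between $(2x/y)^{1-c}$ and $(2x/y)^{a-1}$; identifying each interior layer as a ${}_2F_1$ via (\ref{2F1}) and collapsing the result via a quadratic transformation of the Gauss function should reproduce ${}_2F_1(a,2b;c;x)$. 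The case $m=-2$ then follows by combining the $m=2$ identity with the Pfaff case via $F_{-2}=F_{-1}\circ F_2$. Density of the family $\{f_b\}_{b\in\mathbb{C}}$ in $C^\omega$ (in the sense that all Taylor jets are accessible by finite linear combinations) extends the identity to arbitrary analytic $f$.

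For general $m\in\mathbb{Z}\setminus\{0\}$ with $a-c\in\mathbb{Z}$ (the second clause) I would switch to a differential-operator viewpoint: for $a-c=k\in\mathbb{Z}_{\geq 0}$ one has $\hypop_c^{c+k}=\frac{1}{(c)_k}\prod_{j=0}^{k-1}(\theta_x+c+j)$ with $\theta_x=x\partial_x$, together with the dual integral expression for $k<0$. Under $y=F_m(x)$, the chain rule gives $\theta_x=(mx/y)(1-y)^{(m-1)/m}\theta_y$, and one checks that each factor $(\theta_x+c+j)$ equals a conjugate of $(\theta_y+c_j)$ by explicit powers of $(1-y)$ and $(mx/y)$. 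Iterating over the $|k|$ factors and reshuffling the conjugating multipliers to the outside reproduces the RHS of (\ref{Fnsubs}). The integrality of $a-c$ is essential: it keeps the product finite and sidesteps the convergence subtleties that otherwise restrict the first clause to $|m|\le 2$.

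The main obstacle will be the $m=\pm 2$ verification. The prefactors $(2x/y)^{1-c}$ and $(2x/y)^{a-1}$ are nontrivial power series in $y$ (they involve $\sqrt{1-y}$, since $x=1-\sqrt{1-y}$), so absorbing them correctly into the two inner $\hypop$'s and matching the final compound series to the quadratic transformation of ${}_2F_1$ demands careful algebraic bookkeeping. In the arithmetic case the technical crux is a clean proof of the single commutation identity expressing $(\theta_x+c+j)$ as a conjugate of $(\theta_y+c_j)$, since everything else is a finite iteration of that one step.
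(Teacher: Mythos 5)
Your outline for clause 2 stalls exactly where the real work begins. The differential-operator computation you describe (writing $\hypop_{c}^{c+k}$ as $\prod_j(\theta_x+c+j)/(c)_k$, changing variables via $\theta_x=\frac{xy'}{y}\theta_y$, and pushing the conjugating multipliers outward) yields a product of $|a-c|$ \emph{first-order} factors in $y$ — this is precisely the paper's intermediate formula (\ref{gensubsder}) specialized to $y=F_m(x)$, where $y'=m(1-y)^{1-1/m}$. But the right-hand side of (\ref{Fnsubs}) is a product of $|m|$ factors $\hypop_{c+(j-1)(a-c)/m}^{c+j(a-c)/m}(y)$ whose parameter increments $(a-c)/m$ are in general \emph{not} integers, so these factors are not differential operators and cannot be reached by any amount of ``reshuffling of conjugating multipliers.'' The missing ingredient is a genuine regrouping identity: the paper derives a generalized Euler property (\ref{Eulergen}), iterated as in (\ref{Eulergenm}) and (\ref{Eulergen-m}), which converts a chain of $n$ operators with integer parameter steps interleaved with powers of $(1-x)$ into a chain of $m$ operators with fractional steps $n/m$. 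That identity rests on the Pfaff/Euler property (\ref{Eulerproperty}) and is the heart of the proof; without it your clause-2 argument proves only (\ref{gensubsder}), not (\ref{Fnsubs}).

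For $m=\pm2$ you have correctly identified the obstacle but not removed it: applying the right-hand side to $(1-y)^{-b}$ forces you to hypergeometrize products involving $(1+\sqrt{1-y})^{1-a}$, and ``careful algebraic bookkeeping'' is doing all the work in your sketch. The paper sidesteps this by factoring $F_2=Q\circ P\circ S_{1/2}$ and $F_{-2}=P\circ Q\circ P\circ S_{1/2}$ with $Q(x)=-4x/(1-x)^2$, and proving the single new change-of-variable (\ref{Qt1monom}) for $Q$ by testing on monomials $y^\alpha=(-4x)^\alpha(1-x)^{-2\alpha}$, where the shift property (\ref{shift}) and the known quadratic transform (\ref{2F1Q1}) mesh cleanly; the $F_{\pm2}$ cases are then free, being compositions of already-established substitutions. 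Testing on monomials rather than on the family $(1-y)^{-b}$ also spares you the density argument. I recommend you adopt that decomposition; as written, both halves of your proposal leave the decisive step unproved.
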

\begin{remark}
The product 
$
\prod_{j=1}^{m}
$
in (\ref{Fnsubs}) is understood to be naturally extended for negative $m$ and zero. Let $\szav{A_j}_{j\in\mathbb{Z}}$ be a sequence of invertible linear operators. Then we set
\begin{equation}\label{negativeproduct}
\prod_{j=1}^{0}A_j:=0,\qquad \prod_{j=1}^{-m}A_j:=\prod_{j=1}^{m}A_{1-j}^{-1},\quad \forall m\in\mathbb{N}.
\end{equation}
\end{remark}
\begin{remark}
It is the author believe that Theorem \ref{main} is not in fact limited to parameters $a$, $c$ which differs by an integer but it holds for all their (permissible) complex values. All the restrictions on $m$, $a$ and $c$ thus reflect only the author's inability to prove the theorem in full generality.    
\end{remark}
\begin{conjecture}\label{Conjecture}
The formula (\ref{Fnsubs}) holds for generic values of $a,c\in\mathbb{C}$ and all $m\in\mathbb{Z}\subset \szav{0}$.
\end{conjecture}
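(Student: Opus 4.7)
The plan is to prove the formula in stages, reducing to the base cases $m\in\{-1,1,2\}$ and exploiting the abelian group structure $F_{m_1}\circ F_{m_2}=F_{m_1 m_2}$ of the substitutions. The case $m=1$ is trivial: the substitution is the identity, the product contains the single factor $\hypop_c^a(y)$, and the exponents $1+(c-a)/m$ and $(a-c-1)/m$ sum to zero so that all the $(1-y)$ and $(mx/y)$ prefactors cancel. The case $m=-1$ corresponds to $y=P(x)=x/(x-1)$ and is a direct reformulation of the Pfaff property stated earlier: unfolding the convention (\ref{negativeproduct}) yields $\prod_{j=1}^{-1}A_j=A_0^{-1}$, and the identities $mx/y=1-x$ and $(1-x)(1-y)=1$ collapse all right-hand-side exponents into $(1-y)^a\hypop_c^a(y)(1-y)^{-c}$, which is exactly Pfaff.

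The crucial new content is the case $m=2$, where $y=2x-x^2=4(x/2)(1-x/2)$ and $\sqrt{1-y}=1-x$. Since the family $\{(1-y)^{-b}:b\in\C\}$ contains all polynomials in $y$ (as $b$ ranges over non-positive integers) and hence spans a dense subspace of $C^\omega$, it suffices to verify the identity on $f(y)=(1-y)^{-b}$. The left-hand side evaluates to $\!\!\ _2F_1\zav{\nadsebou{a\quad 2b}{c};x}$. For the right-hand side, one first rewrites $(2x/y)^{a-1}(1-y)^{-b}=2^{a-1}(2-x)^{1-a}(1-x)^{-2b}$, re-expands as a power series in $y$ via $x=1-\sqrt{1-y}$, applies in succession $\hypop_{(a+c)/2}^a(y)$, multiplication by $(1-y)^{(a-c-1)/2}$, and $\hypop_c^{(a+c)/2}(y)$, and finally multiplies by the outer prefactors. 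The resulting identity is a quadratic transformation of $\!\!\ _2F_1$ related to the substitution $y=4z(1-z)$ with $z=x/2$ (in the Gauss--Kummer family), and the $m=2$ case is established by invoking this classical identity.

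The case $m=-2$ is then obtained by composition: $F_{-2}=F_{-1}\circ F_2$, so after applying the $m=2$ formula (with intermediate variable $y_1=F_2(x)$) one applies the Pfaff formula to each of the two $\hypop(y_1)$ factors (mapping $y_1\mapsto y=F_{-1}(y_1)=F_{-2}(x)$) and reconciles the prefactors via $(1-y_1)(1-y)=1$; this completes Case 1. For Case 2 ($a-c\in\Z$ and all $m\in\Z\setminus\{0\}$), the operator $\hypop_c^a$ reduces to a rational function of the Euler operator $\theta_x=x\partial_x$, and the proof would proceed by an induction on $|m|$ using contiguous relations between the intermediate operators $\hypop_{c+(j-1)k/m}^{c+jk/m}(y)$ (where $k=a-c$) together with the formal telescoping $\prod_j\hypop_{c_{j-1}}^{c_j}(y)=\hypop_c^a(y)$ that occurs when the twisting $(1-y)$-factors are switched off. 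The principal obstacle is expected to be the $m=2$ case, requiring identification of the precise classical quadratic transformation and careful tracking of $\sqrt{1-y}$ and the fractional exponents $(1-y)^{1/2}$, $(mx/y)^{a-1}$; a secondary obstacle is the combinatorial bookkeeping of the long twisted product in Case 2, which does not telescope directly and must be unravelled inductively.
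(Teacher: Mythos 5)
The statement you are asked to prove is Conjecture \ref{Conjecture}, i.e.\ that (\ref{Fnsubs}) holds for \emph{generic} $a,c\in\C$ and \emph{all} nonzero integers $m$. The paper does not prove this -- it is stated explicitly as an open conjecture, with only supporting evidence offered (the verification of the cubic consequence (\ref{2F1cubicerd}) in Section \ref{Sproof}). Your proposal, even granting every step, establishes only the two restricted regimes of Theorem \ref{main}: (i) $m\in\{-2,-1,1,2\}$ for arbitrary $a,c$, and (ii) arbitrary $m$ under the hypothesis $a-c\in\Z$. That is precisely the theorem the author already proves, by essentially the same means you describe ($m=\pm 1$ from the Pfaff property (\ref{Pfaffproperty}), $m=\pm2$ from a classical quadratic transformation via Proposition \ref{Qprop}, and integer $a-c$ from the reduction (\ref{Hdifop}) of $\hypop_a^{a+n}$ to a polynomial in $x\partial_x$ combined with the generalized Euler property (\ref{Eulergen})). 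The entire content of the conjecture is the remaining case $|m|\ge 3$ with $a-c\notin\Z$, and your proposal contains no idea that reaches it.

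Concretely, two of your tools are structurally incapable of closing that gap. First, your Case 2 induction rests on $\hypop_c^a$ being a rational expression in the Euler operator $\theta_x=x\partial_x$, which is true only when $a-c\in\Z$; for generic parameters $\hypop_c^a$ is not a (finite-order) differential operator and the change-of-variable formula (\ref{gensubsder}) for $\theta_x$ has no analogue. Second, the composition law $F_{m_1}\circ F_{m_2}=F_{m_1m_2}$ generates from the proven cases only $m=\pm 2^k$, so $m=3$ is unreachable this way; and even for $m=4$ the naive composition of the $m=2$ formula with itself does not reproduce (\ref{Fnsubs}): the parameters $c+j\frac{a-c}{4}$ come out correctly and the $(2x/y)^{a'-1}(2y/z)^{1-c''}$ factors cancel because consecutive upper and lower parameters match, but the $(1-z)$-powers sitting between the two blocks acquire a spurious extra $(1-z)^{1/2}$ that does not commute past the intervening hypergeometrization operators. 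Removing it would require precisely the kind of Euler-property manipulation that, for non-integer $a-c$, the author is unable to carry out -- which is why the statement remains a conjecture. If you intend to prove Theorem \ref{main} rather than the Conjecture, your outline is broadly sound (though the $m=2$ verification is left as an appeal to an unspecified ``classical identity'' and should be pinned down, e.g.\ to \cite[15.8.16]{dlmf15.8} as the paper does); as a proof of the Conjecture it is not an attempt at the actual open case.
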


In summary, using Theorem \ref{main} a ``change of variable'' formula can be obtained for any function $y$ that can be written as a finite composition of 
$$
s_\alpha(x)=\alpha x,\qquad M_n(x)=x^n,\qquad  F_n(x)=1-(1-x)^n,
$$
(right now with additional restriction that $a-c\in\mathbb{Z}$). Note that $F_{-1}(x)=x/(x-1)=P(x)$. 

Here is a small sample of identities on can construct from these functions which are valid for all values of $a$ and $c$:
\begin{align}
&(1-x)^{1-c}\hypop^a_c\zav{x}(1-x)^{a-1}\stackrel{(\ref{Qt3})}{=}\hypop^{\frac{a+c-1}{2}}_c\hspace{-0.8 em}\zav{y}(1-y)^{-\frac{c-a}{2}}\hypop_{\frac{c+a-1}{2}}^{a}\hspace{-0.2 em}(y), & y&=4x(1-x).\\
&(1-x)^{c+a-1}\hypop^a_c\zav{x}(1-x)^{1-c-a}\stackrel{(\ref{Qt1monom})}{=}\hypop^{\frac{a+c-1}{2}}_c\hspace{-0.8 em}\zav{y}(1-y)^{-\frac{c-a}{2}}\hypop_{\frac{a+c-1}{2}}^{a}\hspace{-0.2 em}(y), & y&=\frac{-4x}{(1-x)^2}.\\
&(1+x)^{c+a-1}\hypop^a_c\zav{x}(1+x)^{1-c-a}\stackrel{(\ref{Qt2})}{=}\hypop^{\frac{a+c-1}{2}}_c\hspace{-0.8 em}\zav{y}(1-y)^{-\frac{c-a}{2}}\hypop_{\frac{a+c-1}{2}}^{a}(y), & y&=\frac{4x}{(1+x)^2}.\\
&(1-x)^{\frac{a}{2}}\hypop^a_c\zav{x}(1-x)^{-\frac{c}{2}}\stackrel{(\ref{Qt5})}{=}\hypop^{\frac{a}{2}}_{\frac{c+1}{2}}\zav{y}(1-y)^{-\frac{c-a}{2}}\hypop_{\frac{c}{2}}^{\frac{a+1}{2}}(y), & y&=\frac{x^2}{4(x-1)}.\\
&(1-x)^{1-c}\hypop^a_c\zav{x}(1-x)^{a-1}\\
&\stackrel{(\ref{Qt4})}{=}(1-y)^{\frac{a+c-1}{2}}\hypop^{\frac{a+c-1}{2}}_c\hspace{-0.8 em}\zav{y}(1-y)^{-\frac{c-a}{2}}\hypop_{\frac{a+c-1}{2}}^{a}(y)(1-y)^{-\frac{a+c-1}{2}}, & y&=\frac{4x(x-1)}{(1-2x)^2}.\nonumber\\
&\zav{1-\frac{x}{2}}^{a}\hypop^a_c\zav{x}\zav{1-\frac{x}{2}}^{-c}\stackrel{(\ref{Qt6})}{=}\hypop^{\frac{a}{2}}_{\frac{c+1}{2}}\zav{y}\hypop_{\frac{c}{2}}^{\frac{a+1}{2}}(y), & y&=\frac{x^2}{(2-x)^2}.\\
&(1-x^2)^{\frac{a+1}{2}}\hypop_{c}^a(x) (1-x^2)^{-\frac{c+1}{2}}\stackrel{(\ref{Qt7})}{=}\hypop_{\frac{c}{2}}^{\frac{a+1}{2}}(y)(1-y)^{-\frac{c-a}{2}}\hypop_{\frac{c+1}{2}}^{\frac{a}{2}}(y), & y&=\frac{x^2}{x^2-1}.
%
%&(1-x)^{1-c}\hypop^a_c\zav{x}(1-x)^{a-1}=\hypop^{\frac{a+c-1}{2}}_c\zav{y}(1-y)^{-\frac{c-a}{2}}\hypop_{\frac{c+a-1}{2}}^{a}(y), & y&=4x(1-x).\\
%&(1-x)^{c+a-1}\hypop^a_c\zav{x}(1-x)^{1-c-a}=\hypop^{\frac{a+c-1}{2}}_c\zav{y}(1-y)^{-\frac{c-a}{2}}\hypop_{\frac{a+c-1}{2}}^{a}(y), & y&=\frac{-4x}{(1-x)^2}.\\
%&(1-x)^{\frac{a}{2}}\hypop^a_c\zav{x}(1-x)^{-\frac{c}{2}}=\hypop^{\frac{a}{2}}_{\frac{c+1}{2}}\zav{y}(1-y)^{-\frac{c-a}{2}}\hypop_{\frac{c}{2}}^{\frac{a+1}{2}}(y), & y&=\frac{x^2}{4(x-1)}.\\
%&(1-x)^{1-c}\hypop^a_c\zav{x}(1-x)^{a-1}\\
%&=(1-y)^{\frac{a+c-1}{2}}\hypop^{\frac{a+c-1}{2}}_c\zav{y}(1-y)^{-\frac{c-a}{2}}\hypop_{\frac{a+c-1}{2}}^{a}(y)(1-y)^{-\frac{a+c-1}{2}}, & y&=\frac{4x(x-1)}{(1-2x)^2}.\nonumber\\
%&\zav{1-\frac{x}{2}}^{a}\hypop^a_c\zav{x}\zav{1-\frac{x}{2}}^{-c}=\hypop^{\frac{a}{2}}_{\frac{c+1}{2}}\zav{y}\hypop_{\frac{c}{2}}^{\frac{a+1}{2}}(y), & y&=\frac{x^2}{(2-x)^2}.\\
%&(1-x^2)^{\frac{a+1}{2}}\hypop_{c}^a(x) (1-x^2)^{-\frac{c+1}{2}}=\hypop_{\frac{c}{2}}^{\frac{a+1}{2}}(y)(1-y)^{-\frac{c-a}{2}}\hypop_{\frac{c+1}{2}}^{\frac{a}{2}}(y), & y&=\frac{x^2}{x^2-1}.
\end{align}
And so on. 

In what follows, and to demonstrate the technique, we are going to  use hypergeometrization to derive many \textit{known} identities involving special functions. There are, however, three identities which are possibly new (or at least the author is unable to find them in the literature). These are: 
%To demonstrate usefulness of these formulas (and hypergeometrization technique as a whole) we are going to derive number of known transforms between various special functions, including:
\begin{itemize}
\item  A quadratic transform for $F_1$ function: Let $\beta:=\frac{a+c-1}{2}$. Then
\begin{equation}\label{F1Qi}
F_1\zav{\nadsebou{a}{c};\nadsebou{\beta \quad \beta }{-};\tau_+ x,\tau_- x}\serovna{F1Q}(1-x)^{-2\beta} F_1\zav{\nadsebou{\beta}{c};\nadsebou{\frac{c-a}{2}\quad a}{-};\frac{-4x}{(1+x)^2},\frac{-4x t}{(1+x)^2}},
\end{equation}
where
$$
\tau_{\pm}:=2\zav{(2t-1)^2\pm\sqrt{t(t-1)}}.
$$
\item A semi-cubic reduction of $F_1$ to $\!\!\ _2 F_1$:
\begin{equation}
(1-x)^{-2a}\!\! \ _2 F_1\zav{\nadsebou{\frac{a}{3}\quad \frac{2a}{3}}{\frac{a}{3}+1};\zav{\frac{x}{x+1}}^3}\serovna{F1semicubic}F_1\zav{\nadsebou{a}{a+1};\nadsebou{\frac12\quad \frac23 a}{-};4x(1-x),3x(1-x)}.
\end{equation}
\item $G_2$ to  $F_2$ conversion:
\begin{equation}
G_2\zav{a\quad c; \nadsebou{b_1\quad b_2}{-};x,y}\stackrel{(\ref{G2toF2})}{=}(1+x)^{-b_1}(1+y)^{-b_2}F_2\zav{\nadsebou{1-c-a}{-};\nadsebou{b_1}{1-c}\nadsebou{b_2}{1-a};\frac{x}{x+1},\frac{y}{y+1}}.
\end{equation}
%\item A summation formula for $\!\! \ _2 F_1\zav{\frac34}$:
%$$
%\!\! \ _2 F_1\zav{\nadsebou{a\quad \frac23 a}{a+\frac12};\frac34}\serovna{2F1(3/4)}\frac{4^{\frac23} \Gamma\zav{1+\frac13 a}\Gamma\zav{a+\frac12}}{\Gamma\zav{\frac12+\frac13 a}\Gamma\zav{1+a}}.
%$$
\end{itemize}
Particularly, it does not seem to be possible to derive the first formula (\ref{F1Qi}) from Carlson's results about quadratic transforms of $F_1$ function given in \cite{Carlson2}.

The structure if the paper is as follows: Basic properties of hypergeometrization operator are discussed in Section \ref{BPsec}. In Section \ref{S5} the methodology of representing a special functions via hypergeometrization is described. Section \ref{Pfaffpropertysec} introduces the Pfaff property. Its consequences are discussed in Section \ref{S8}. Treatment of the change of variable formula is done in Section \ref{S10}. Finally, in Section \ref{Sproof} we prove Theorem \ref{main} and provide some supporting evidence for Conjecture \ref{Conjecture}.  
\begin{remark}
The concept of hypergeometrization was introduced by the present author in \cite{B2} and was also mentioned in \cite{B7}. It can be understood as a Hadamard product (or a convolution)
$$
 \hypop_{c}^{a}f(x)=\!\! \ _{2} F_1\zav{\nadsebou{a \quad 1}{c};x}\star f(x), 
$$
where the Hadamard product of the two formal power series $g(x)=\sum_{k\geq 0} g_k x^k$, $h(x)=\sum_{k\geq 0} h_k x^k$ is defined 
$$
g(x)\star h(x):=\sum_{k=0}^{\infty}g_k h_k x^k.
$$
Before \cite{B2}, a linear operator which brings a function to its Hadamard product with some hypergeometric function (i.e. to its hypergeometrization) appeared also in \cite{Carlson} and elsewhere. But hypergeometrization is a special case of Hadamard product, and -- as we will endeavor to show -- has many properties the general Hadamard product does not posses.
\end{remark}

\section{Basic properties}\label{BPsec}
An important property of hypergeometrization is that (generically) it does not change the radius of convergence. 
\begin{proposition}\label{Radius}
Let $R>0$ be a radius of convergence of the following power series: 
$$
f(x)=\sum_{n=0}^\infty f_n x^n,\qquad \abs{x}<R.
$$
Let $1-a,1-c\not\in\mathbb{N}$. Then
$$
\hypop_{c}^a f(x)=\sum_{n=0}^\infty \frac{(a)_n}{(c)_n}f_n x^n,
$$
converges for all $\abs{x}<R$.
\end{proposition}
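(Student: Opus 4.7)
The plan is to apply the Cauchy–Hadamard formula and show that multiplication of the coefficients by the Pochhammer ratio $(a)_n/(c)_n$ introduces only a polynomial growth factor in $n$, which becomes invisible after taking $n$-th roots.

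First I would rewrite the ratio in terms of the Gamma function,
$$
\frac{(a)_n}{(c)_n}=\frac{\Gamma(c)}{\Gamma(a)}\cdot\frac{\Gamma(a+n)}{\Gamma(c+n)},
$$
the hypotheses $1-a,1-c\not\in\mathbb{N}$ ensuring that $\Gamma(a),\Gamma(c)$ are finite and that $(a)_n,(c)_n$ are nonzero for every $n\geq 0$, so the quotient is well-defined. The classical Stirling asymptotic for a ratio of Gamma functions with linearly shifted arguments gives
$$
\frac{\Gamma(a+n)}{\Gamma(c+n)}=n^{a-c}\zav{1+O(1/n)},\qquad n\to\infty,
$$
so that $\abs{(a)_n/(c)_n}$ grows like $n^{\Re(a-c)}$, and in particular
$$
\lim_{n\to\infty}\abs{\frac{(a)_n}{(c)_n}}^{1/n}=1.
$$

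Now the Cauchy–Hadamard formula gives $1/R=\limsup_{n\to\infty}|f_n|^{1/n}$. Because the sequence $\abs{(a)_n/(c)_n}^{1/n}$ converges to the positive finite limit $1$, it may be pulled through the limsup, yielding
$$
\limsup_{n\to\infty}\abs{f_n\,\frac{(a)_n}{(c)_n}}^{1/n}=\lim_{n\to\infty}\abs{\frac{(a)_n}{(c)_n}}^{1/n}\cdot\limsup_{n\to\infty}|f_n|^{1/n}=\frac{1}{R},
$$
so $\hypop_c^a f$ has the same radius of convergence $R$, and in particular converges for all $\abs{x}<R$. I do not expect any substantive obstacle; the only point requiring mild care is the limit–limsup factorization above, which is valid precisely because the Pochhammer factor contributes an ordinary limit that is strictly between $0$ and $\infty$ on the $n$-th root scale.
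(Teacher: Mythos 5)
Your proof is correct and follows essentially the same route as the paper: both rest on the Gamma-function asymptotic $\Gamma(a+n)/\Gamma(c+n)\sim n^{a-c}$, which shows the inserted factor $(a)_n/(c)_n$ grows only polynomially and hence cannot change the radius of convergence. Your write-up merely makes the final step explicit via Cauchy--Hadamard, where the paper leaves it at the observation that a polynomial factor is negligible against the exponential decay of $|f_n x^n|$.
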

\begin{proof}
It is a standard result for $\Gamma$ function that
$$
\lim_{n\to \infty }n^{c-a}\frac{(a)_n}{(c)_n}=\lim_{n\to \infty }n^{c-a}\frac{\Gamma(a+n)\Gamma(c)}{\Gamma(a)\Gamma(c+n)}= \frac{\Gamma(c)}{\Gamma(a)},
$$
and thus the introduced factor $(a)_n/(c)_n$ grows only polynomially in $n$ and is therefore negligible comparing to the exponential behavior of $x^n$ term.
\end{proof}
Another crucial observation for our purposes is that when the parameters $a$, $c$ differ by an integer, the hypergeometrization reduces to a differential operator. 
\begin{equation}\label{Hdifop}
\hypop_{a}^{a+n}(x)=\frac{(a+x\partial_x)_n}{(a)_n}.
\end{equation}
The proof is straightforward.

Some additional elementary properties of hypergeometrization includes:
\begin{align}
&\hypop^{a}_{c}\zav{\alpha f+\beta g}=\alpha\hypop^{a}_{c} f+\beta \hypop^{a}_{c} g, & \text{linearity,}\label{linearity}\\
&\hypop^{a}_{c} \hypop^{b}_{d}=\hypop^{b}_{d}\hypop^{a}_{c}, & \text{commutativity,}\label{commutativity}\\
&\hypop^{a}_{c} \hypop^{b}_{d}=\hypop^{a}_{d}\hypop^{b}_{c}=\hypop^{b}_{c} \hypop^{a}_{d}, & \text{parameter exchange,}\label{exchange}\\
&\zav{\hypop^{a}_{c}}^{-1}=\hypop^{c}_{a}, & \text{inverse,}\label{inverse}\\
&\hypop^{a}_{c} x^n =\frac{(a)_n}{(c)_n}x^n\hypop^{a+n}_{c+n}, & \text{shift,}\label{shift}\\
&(\partial_x)^n\hypop^{a}_{c} =\frac{(a)_n}{(c)_n}\hypop^{a+n}_{c+n}(\partial_x)^n, & \text{dual shift,}\label{coshift}\\
&\hypop^{a}_c(\alpha x)=\hypop^{a}_c(x), &\text{argument scaling,}\label{argscaling}\\
&\hypop^{a}_c(x)=\hypop^{\frac{a}{2}}_{\frac{c}{2}}\zav{x^2}\hypop^{\frac{a+1}{2}}_{\frac{c+1}{2}}\zav{x^2},&\text{argument square,}\label{secondpower}\\
&\hypop^{a}_c(x)=\hypop^{\frac{a}{n}}_{\frac{c}{n}}\zav{x^n}\dots\hypop^{\frac{a+n-1}{n}}_{\frac{c+n-1}{n}}\zav{x^n}&\text{$n$-th power,}\label{nthpower}\\
&c\hypop^{a}_{c}-a \hypop^{a+1}_{c+1}+(a-c)\hypop^{a}_{c+1}=0, & \text{contiguous relation,}\label{contiguous}\\
&\hypop^{a}_{a+1} \hypop^{-a}_{1-a}=\frac12\hypop^{a}_{a+1}+\frac12\hypop^{-a}_{1-a}, & \text{per partes.}\label{perpartes}
\end{align}
Here the function $f$, $g$ are analytic near the origin, $\alpha,\ \beta \in\C$ and $n\in\N$. Parameters $a,\ b, \ c,\ d$ can be arbitrary complex numbers with the possible restriction on the lower parameters $1-c\not\in \N$.
\begin{proof}
Since we are working on function analytic near origin, it is actually sufficient to verify all these claims only on monomials $x^n$ which is -- mostly -- straightforward and are left to the reader as an stimulating exercise. Identities (\ref{secondpower}), (\ref{nthpower}) are based on the following property of Pochhammer symbols:
\begin{equation}\label{Pochpower}
\forall n,k\in \mathbb{N}:\qquad (a)_{nk}=\zav{\frac{a}{n}}_k\zav{\frac{a+1}{n}}_k\cdots \zav{\frac{a+n-1}{n}}_k n^{nk}.
\end{equation}
A property that perhaps deserves some comment is the very last one. It too can be very easily checked on monomials as follows:
$$
\hypop^{a}_{a+1} \hypop^{-a}_{1-a}x^n=\frac{(a)_n(-a)_n}{(a+1)_n(1-a)_n}x^n=\frac{-a^2}{(a+n)(n-a)}x^n=\frac{a}{2(n+a)}x^n-\frac{a}{2(n-a)}x^n=\frac12\hypop_{1-a}^{-a}x^n+\frac12\hypop_{1+a}^{a}x^n.
$$
But why is it called ``per partes''?

Remember that from (\ref{Hdifop}) when the upper parameter differs from the lower one by 1, the hypergeometrization reduces to:
$$
\hypop^{a+1}_{a}=\frac{a+x\partial_x}{a}=\frac{1}{a} x^{1-a}\partial_x x^{a}.
$$ 
Thus its inverse is an integral operator
$$
\hypop^{a}_{a+1}=\zav{\hypop^{a+1}_{a}}^{-1}=\zav{\frac{1}{a} x^{1-a}\partial_x x^{a}}^{-1}=ax^{-a}\int \dd x x^{a-1},
$$
modulo integration constant, of course.
Hence
\begin{align*}
\hypop^{a}_{a+1}\hypop^{-a}_{1-a}&=ax^{-a}\int \dd x x^{a-1}(-a)x^{a}\int \dd x x^{-a-1}=-a^2 x^{-a}\int \dd x x^{2a-1}\int \dd x x^{-a-1}\\
&=-a^2x^{-a}\zav{\frac{x^{2a}}{2a}\int \dd x x^{-a-1}-\int \dd x \frac{x^{2a}}{2a}\partial_x\int \dd x x^{-a-1}}\\
&=\frac{-a}{2}x^{a}\int \dd x x^{-a-1}+\frac{a}{2}x^{-a}\int \dd x x^{a-1}=\frac12\hypop_{1-a}^{-a}+\frac12\hypop_{1+a}^{a}.
\end{align*}
Here we have used ``integration per partes'' in the operator notation:
$$
\int \dd x x^{2a-1}= \frac{x^{2a}}{2a}-\int \dd x \frac{x^{2a}}{2a}\partial_x.
$$
\end{proof}
\section{Special function representation}\label{S5}
\subsection{Generalized hypergeometric functions}
Remember:
\begin{definition} \textit{Generalized hypergeometric functions} $\!\! \ _p F_q$ are defined as follows:
\begin{equation}\label{seriesdef}
\!\! \ _p F_q\zav{\nadsebou{a_1\dots a_p  }{c_1\dots c_q};x}:=\sum_{k=0}^{\infty}\frac{(a_1)_k\cdots (a_p)_k}{(c_1)_k\cdots (c_q)_k}\frac{x^k}{k!},\qquad 1-c_k\not\in \mathbb{N}, \forall k.
\end{equation}
The series converges in the entire complex plane if $p\leq q$. For $p=q+1$ it converges in the unit disc $\abs{x}<1$ and for $p>q+1$ it is generally divergent unless one of the upper parameters is a negative integer, in which case the series terminates and the resulting hypergeometric function is actually a polynomial.
\end{definition}  
\begin{proposition}\label{pfqrepr}For $n\in\mathbb{N}$ let 
\begin{equation}\label{fndef}
f_n(x):=\frac{1}{n}\zav{e^{n z_0 \sqrt[n]{x}}+e^{n z_1 \sqrt[n]{x}}+\dots e^{n z_{n-1} \sqrt[n]{x}}}=\sum_{k=0}^{\infty}\frac{n^{nk}x^k}{(nk)!},\qquad z_j:=e^{\frac{2\pi\imag j}{n}}.
\end{equation}
In particular
\begin{align*}
f_1&=e^x,\\
f_2&=\frac12\zav{e^{\sqrt{x}}+e^{-\sqrt{x}}}=\cosh(2\sqrt{x}),\\
f_3&=\frac13\zav{e^{3\sqrt[3]{x}}+2e^{-\frac32\sqrt[3]{x}}\cos\zav{\frac{3\sqrt{3}}{2}\sqrt[3]{x}}},\\
&\vdots
\end{align*}
%Then for any complex numbers $c_1,\dots c_{n-1}\in\mathbb{C}$, such that $1-c_i\not\in\mathbb{N}$ $\forall i$ it holds:
%$$
%\hypop_{c_1}^{\frac{1}{n}} \hypop_{c_2}^{\frac{2}{n}}\dots \hypop_{c_{n-1}}^{\frac{n-1}{n}} f_n(x) =\!\! \ _0 F_{n-1}\zav{\nadsebou{-}{c_1\dots c_{n-1}};x},
%$$
Then for any complex numbers $a_1,\dots, a_m$  and $c_1,\dots c_{m+n-1}\in\mathbb{C}$, such that $1-c_i\not\in\mathbb{N}$ $\forall i$ it holds:
$$
\!\! \ _m F_{m+n-1}\zav{\nadsebou{a_1\dots a_m}{c_1\dots c_{n+m-1}};x}=
\hypop_{c_1}^{\frac{1}{n}} \hypop_{c_2}^{\frac{2}{n}}\dots \hypop_{c_{n-1}}^{\frac{n-1}{n}} \hypop_{c_n}^{a_1}\dots \hypop_{c_{n+m-1}}^{a_m} f_n(x). 
$$
\end{proposition}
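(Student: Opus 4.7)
The strategy is a direct coefficient-by-coefficient verification: write the Taylor series of $f_n$, apply each hypergeometrization operator by reading off how it transforms a monomial, and then collapse the resulting product of Pochhammer symbols using the power identity (\ref{Pochpower}).

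First I would justify the series representation
$$
f_n(x)=\sum_{k=0}^{\infty}\frac{n^{nk}}{(nk)!}\,x^k,
$$
announced in (\ref{fndef}). Starting from the exponential form, I would expand each $e^{nz_j\sqrt[n]{x}}$ as a power series in $\sqrt[n]{x}$ and use the root-of-unity filter $\sum_{j=0}^{n-1}z_j^k=n\cdot\mathbf{1}_{n\mid k}$ to kill every term except those of degree a multiple of $n$, which yields the stated series in $x$.

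Next, since each $\hypop_c^a$ is linear and multiplies the $x^k$ coefficient by $(a)_k/(c)_k$, composing the operators $\hypop_{c_1}^{1/n}\cdots\hypop_{c_{n-1}}^{(n-1)/n}\hypop_{c_n}^{a_1}\cdots\hypop_{c_{n+m-1}}^{a_m}$ (using commutativity (\ref{commutativity}) for bookkeeping) and applying the result to $f_n$ gives the series whose $x^k$ coefficient is
$$
\frac{(1/n)_k(2/n)_k\cdots((n-1)/n)_k\,(a_1)_k\cdots(a_m)_k}{(c_1)_k\cdots(c_{n+m-1})_k}\cdot\frac{n^{nk}}{(nk)!}.
$$

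Finally, I would invoke (\ref{Pochpower}) specialised at $a=1$, which gives
$$
(nk)!=(1)_{nk}=(1/n)_k(2/n)_k\cdots((n-1)/n)_k\,(1)_k\,n^{nk}=(1/n)_k\cdots((n-1)/n)_k\,k!\,n^{nk}.
$$
Substituting this back cancels the awkward factor and leaves the coefficient $(a_1)_k\cdots(a_m)_k/\bigl((c_1)_k\cdots(c_{n+m-1})_k\,k!\bigr)$, which is exactly the $k$-th term of the series defining $\!\!\ _m F_{m+n-1}$ in (\ref{seriesdef}). There is no real obstacle in this proof: the only non-mechanical step is recognising that (\ref{Pochpower}) at $a=1$ is precisely the multiplication theorem needed to convert $(nk)!$ into the product of the inserted Pochhammer symbols $(j/n)_k$ times $k!$; once this is noticed, the rest is bookkeeping.
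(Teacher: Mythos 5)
Your proof is correct and follows essentially the same route as the paper: the key step in both is the specialisation of (\ref{Pochpower}) at $a=1$, giving $(nk)!=(1/n)_k\cdots((n-1)/n)_k\,k!\,n^{nk}$, which identifies $f_n$ with $\!\!\ _0F_{n-1}$ and lets the successive hypergeometrizations build up $\!\!\ _mF_{m+n-1}$ term by term. The only difference is cosmetic: you also verify the series form of $f_n$ from its exponential definition via the root-of-unity filter, a detail the paper treats as part of the statement.
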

\begin{proof} From (\ref{Pochpower}) it follows that:
$$
(nk)!=(1)_{nk}=\zav{\frac{1}{n}}_{k}\zav{\frac{2}{n}}_{k}\cdots \zav{\frac{n-1}{n}}_k k! n^{nk}.
$$
Thus
$$
f_n=\sum_{k=0}^{\infty}\frac{n^{nk}x^k}{(nk)!}=\!\! \ _0 F_{n-1}\zav{\nadsebou{-}{\frac{1}{n}\quad \frac{2}{n}\dots \frac{n-1}{n}};x}.
$$
The result is obtained by successive application of definition (\ref{naivehyp}). 
\end{proof} 
The one advantage of this approach is that it makes questions of convergence clear.  Since, evidently, the hypergeometrization does not change the region of convergence, we can see at once that the series $\!\! \ _{q+1} F_q$ converges in the unit disk (since those functions originated from $(1-x)^{-b}$) and the rest $\!\! \ _{p} F_q$ $(p\leq q)$ converges everywhere since they are constructed from entire functions like $e^x,\cosh(2\sqrt{x})$ etc.
\subsection{Appell's functions.}
Appell's function are defined by the following double series:
\begin{align}
F_1\zav{\nadsebou{a}{c};\nadsebou{b_1\quad b_2}{-};x,y}&:=\sum_{j,k=0}^{\infty}\frac{(a)_{j+k}}{(c)_{j+k}}\frac{(b_1)_j(b_2)_k}{j!k!} x^j y^k,\\
F_2\zav{\nadsebou{a}{-};\nadsebou{b_1}{c_1}\nadsebou{b_2}{c_2};x,y}&:=\sum_{j,k=0}^{\infty}\frac{(a)_{j+k}}{j!k!}\frac{(b_1)_j(b_2)_k}{(c_1)_j(c_2)_k} x^j y^k,\\
F_3\zav{\nadsebou{-}{c};\nadsebou{a_1\quad b_1}{-}\nadsebou{a_2\quad b_2}{-};x,y}&:=\sum_{j,k=0}^{\infty}\frac{(a_1)_j(b_1)_j(a_2)_k(b_2)_k}{(c)_{j+k}j!k!} x^j y^k,\\
F_4\zav{\nadsebou{a\quad b}{-};\nadsebou{-}{c\quad d};x,y}&:=\sum_{j,k=0}^{\infty}\frac{(a)_{j+k}(b)_{j+k}}{j!k!(c)_{j}(d)_k} x^j y^k.
\end{align} 
All of these functions can be as well represented as a hypergeometrization of some elementary function:
\begin{proposition}
\begin{align}
\intertext{Appell's $F_1$ function:}
\hypop_{c}^{a}(t)(1-tx)^{-b_1}(1-ty)^{-b_2}&=F_1\zav{\nadsebou{a}{c};\nadsebou{b_1\quad b_2}{-};tx,ty}.\label{F1}\\
\intertext{Appell's $F_2$ function:}
%f(x)&:=(1-x)^{-a}\!\! \ _2 F_1\zav{\nadsebou{a\quad b_2}{c_2};\frac{y}{1-x}},& &\Longrightarrow& 
%f(x)&:=g_x\zav{\nadsebou{b_2}{c_2};y},\ g(y):=(1-y)^{-a},& &\Longrightarrow& 
%f_0\zav{\nadsebou{b_1}{c_1};x}&=F_2\zav{\nadsebou{a}{-};\nadsebou{b_1}{c_1}\nadsebou{b_2}{c_2};x,y}.\label{F2}
%f(x)&:=g_x\zav{\nadsebou{b_2}{c_2};y},\ g(y):=(1-y)^{-a},& &\Longrightarrow& 
\hypop_{c_1}^{b_1}(x)\hypop_{c_2}^{b_2}(y)(1-x-y)^{-a}&=F_2\zav{\nadsebou{a}{-};\nadsebou{b_1}{c_1}\nadsebou{b_2}{c_2};x,y}. \label{F2}\\
%\intertext{Appell's $F_3$ function. Let 
%$$
%f(x,y,t):=\frac{{\rm arctan}\sqrt{t^2xy-tx-ty}}{\sqrt{t^2xy-tx-ty}}=\sum_{k,l=0}^{\infty}\frac{(2k)!(2l)!}{k!l!}\frac{(k+l)!}{(2k+2l+1)!}x^ky^l t^{k+l},
%$$
%Then}
%\hypop_{1}^{a_1}(x)\hypop_{\frac12}^{b_1}(x)\hypop_{1}^{a_2}(y)\hypop_{\frac12}^{b_2}(y)\hypop_{c}^{\frac32}(t)f&=F_3\zav{\nadsebou{}{c};\nadsebou{a_1\quad b_1}{-}\nadsebou{a_2\quad b_2}{-};tx,ty}.\label{F3}\\
\intertext{Appell's $F_3$ function.}
\hypop_{1}^{a_1}(x)\hypop_{\frac12}^{b_1}(x)\hypop_{1}^{a_2}(y)\hypop_{\frac12}^{b_2}(y)\hypop_{c}^{\frac32}(t)\frac{{\rm arctan}\sqrt{t^2xy-tx-ty}}{\sqrt{t^2xy-tx-ty}}&=F_3\zav{\nadsebou{}{c};\nadsebou{a_1\quad b_1}{-}\nadsebou{a_2\quad b_2}{-};tx,ty}.\label{F3}\\
%\intertext{Appell's $F_4$ function. Let $f(x,y,t):=\sum_{k,l=0}^{\infty}\frac{(2a)_{2k+2l}}{(2k)!(2l)!}x^ky^l t^{k+l}$. It is is  an elementary function since it can be written
%$$
%f(x,y,t)=\frac{\zav{1-\sqrt{tx}-\sqrt{ty}}^{-2a}+\zav{1-\sqrt{tx}+\sqrt{ty}}^{-2a}+\zav{1+\sqrt{tx}-\sqrt{ty}}^{-2a}+\zav{1+\sqrt{tx}+\sqrt{ty}}^{-2a}}{4},
%$$
%Then}
%\hypop^{\frac12}_{c}(x)\hypop^{\frac12}_{d}(y)\hypop^{b}_{a+\frac12}(t) f&=F_4\zav{\nadsebou{a\quad b}{-};\nadsebou{-}{c}\nadsebou{-}{d};tx,ty}.\label{F4}
\intertext{Appell's $F_4$ function.}
\hypop^{\frac12}_{c}(x)\hypop^{\frac12}_{d}(y)\hypop^{b}_{\frac12}(t)\hypop^{a}_{1}(t)
\frac{1-t(x+y)}{1-2t(x+y)+t^2(x-y)^2}&=F_4\zav{\nadsebou{a\quad b}{-};\nadsebou{-}{c}\nadsebou{-}{d};tx,ty}.\label{F4}
\end{align} 
\end{proposition}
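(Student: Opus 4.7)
My plan is to prove each identity by expanding the function on the left-hand side as a Taylor series in the relevant variables and matching coefficients with the defining series of the corresponding Appell function. The key observation is that $\hypop_c^a(x)$ simply multiplies the coefficient of $x^n$ by $(a)_n/(c)_n$, so the commuting string of operators appearing in each formula has the effect of inserting exactly the required Pochhammer ratios term by term.

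For (\ref{F1}), I would expand $(1-tx)^{-b_1}(1-ty)^{-b_2} = \sum_{j,k} \frac{(b_1)_j(b_2)_k}{j!\,k!}\,t^{j+k} x^j y^k$ by the binomial theorem, after which $\hypop_c^a(t)$ multiplies $t^{j+k}$ by $(a)_{j+k}/(c)_{j+k}$ and the $F_1$ series appears directly. For (\ref{F2}), I would expand $(1-x-y)^{-a}$ multinomially as $\sum_{j,k} \frac{(a)_{j+k}}{j!\,k!}\,x^j y^k$ and then apply $\hypop^{b_1}_{c_1}(x)\,\hypop^{b_2}_{c_2}(y)$ to supply the missing factor $(b_1)_j(b_2)_k/[(c_1)_j(c_2)_k]$.

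For (\ref{F4}), the key is the factorization
\[
1 - 2t(x+y) + t^2(x-y)^2 = \bigl(1 - t(\sqrt{x}+\sqrt{y})^2\bigr)\bigl(1 - t(\sqrt{x}-\sqrt{y})^2\bigr),
\]
combined with $1 - t(x+y) = \tfrac{1}{2}\bigl[(1 - t(\sqrt{x}+\sqrt{y})^2) + (1 - t(\sqrt{x}-\sqrt{y})^2)\bigr]$, so that partial fractions yields
\[
\frac{1 - t(x+y)}{1 - 2t(x+y) + t^2(x-y)^2} = \frac{1}{2}\left[\frac{1}{1 - t(\sqrt{x}+\sqrt{y})^2} + \frac{1}{1 - t(\sqrt{x}-\sqrt{y})^2}\right].
\]
Expanding each geometric series and averaging kills all odd powers of $\sqrt{x}\sqrt{y}$, leaving the double series $\sum_{i,j \geq 0} \binom{2(i+j)}{2j}\,t^{i+j} x^i y^j$. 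The four hypergeometrizations then introduce the combined factor $(1/2)_i(1/2)_j(a)_{i+j}(b)_{i+j}/[(c)_i(d)_j(1/2)_{i+j}(i+j)!]$, and the Legendre duplication identity $(1/2)_n = (2n)!/(4^n n!)$ combines with $\binom{2(i+j)}{2j}$ to collapse the ratio to $1/(i!\,j!)$, matching the $F_4$ series.

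The hardest case is (\ref{F3}). I would write $\arctan\sqrt{u}/\sqrt{u} = \sum_n (-u)^n/(2n+1)$ with $u = t^2xy - tx - ty = -t(x+y-txy)$, expand $(-u)^n = t^n(x+y-txy)^n$ by the trinomial theorem, and collect the coefficient of $t^{I+J} x^I y^J$, which becomes
\[
c_{I,J} = \sum_{k=0}^{\min(I,J)} \frac{(-1)^k (I+J-k)!}{(2(I+J-k)+1)\,(I-k)!\,(J-k)!\,k!}.
\]
After the five hypergeometrizations are applied, the matching with $F_3$ reduces to the single closed-form identity $c_{I,J} = (1/2)_I(1/2)_J/(3/2)_{I+J}$, equivalent to $\arctan\sqrt{xy-x-y}/\sqrt{xy-x-y} = F_3(1, 1/2, 1, 1/2; 3/2; x, y)$; this combinatorial identity is the main obstacle. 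I would prove it by using the Beta integral $1/(2n+1) = \int_0^1 v^{2n}\,dv$ together with the reformulation $(I+J-k)!/[(I-k)!(J-k)!\,k!] = \binom{I+J}{I}(-1)^k(-I)_k(-J)_k/[(-I-J)_k\,k!]$ to recast
\[
c_{I,J} = \binom{I+J}{I}\int_0^1 v^{2(I+J)}\,{}_2F_1(-I,-J;-I-J;v^{-2})\,dv,
\]
and after the substitution $w = v^{-2}$ reducing to a finite sum of elementary integrals $\int_1^\infty w^{k-I-J-3/2}\,dw$ which produces a terminating Saalsch\"{u}tzian $_3F_2$ summing in closed form to the desired right-hand side.
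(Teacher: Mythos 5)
Your proposal is correct and follows essentially the same route as the paper: both reduce each identity to the fact that hypergeometrization acts diagonally on the double Taylor series, so that the elementary function on the left is just the corresponding Appell series with coincident upper and lower parameters. The paper leaves those four ``seed'' verifications to the reader as an easy exercise, whereas you actually carry them out — including the genuinely nontrivial $F_3$ case, where your reduction of the coefficient sum to a terminating Saalsch\"utzian $\ _3F_2$ does close the argument (one checks that $1$ plus the sum of the upper parameters $-I,\,-J,\,-I-J-\tfrac12$ equals the sum of the lower parameters $-I-J,\,\tfrac12-I-J$, so Pfaff--Saalsch\"utz applies and yields exactly $(1/2)_I(1/2)_J/(3/2)_{I+J}$) — and your partial-fraction factorization plus Legendre duplication for $F_4$ is likewise a valid completion of the exercise.
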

\begin{proof}
The proof amounts to show that
\begin{align*}
(1-tx)^{-b_1}(1-ty)^{-b_2}&=F_1\zav{\nadsebou{c}{c};\nadsebou{b_1\quad b_2}{-};tx,ty},\\
(1-x-y)^{-a}&=F_2\zav{\nadsebou{a}{-};\nadsebou{c_1}{c_1}\nadsebou{c_2}{c_2};x,y}, \\
\frac{{\rm arctan}\sqrt{t^2xy-tx-ty}}{\sqrt{t^2xy-tx-ty}}&=F_3\zav{\nadsebou{}{\frac32};\nadsebou{1\quad \frac12}{-}\nadsebou{1\quad \frac12}{-};tx,ty},\\ \frac{1-t(x+y)}{1-2t(x+y)+t^2(x-y)^2}&=F_4\zav{\nadsebou{1\quad \frac12}{-};\nadsebou{-}{\frac12}\nadsebou{-}{\frac12};tx,ty},
\end{align*}
which is left to the reader as an easy exercise.
\end{proof}
Once again we can retrieve the information about the regions of convergence for Appell's series from their elementary origins. Since the hypergeometrization does not change the radius of convergence, we can deduce from the fact that
$$
(1-x)^{-b_1}(1-y)^{-b_2}=\sum_{j,k=0}^{\infty}\frac{(b_1)_j(b_2)_k}{j!k!}x^j y ^k<\infty \qquad \Leftarrow \qquad \abs{x}<1,\abs{y}<1,
$$
that the same is true for $F_1$ function.

Similar arguments in other cases gives us the following overall list:
\begin{align*}
F_1:& & \abs{x}<1,\abs{y}&<1,\\
F_2:& & \abs{x+y}&<1,\\
F_3:& & \abs{xy-x-y}&<1,\\
F_4:& & \abs{\sqrt{x}+\sqrt{y}}<1,\abs{\sqrt{x}-\sqrt{y}}&<1.
\end{align*}
This trick is, essentially, \textit{Horn's principle} in reverse. 

(Horn's principle states that the region of convergence of any hypergeometric function does not depend on the specific values of parameters -- safe for some exceptional pathological values, like negative integers and so on. See \cite{Horn}.)

\begin{example}\label{F1to2F1ex}
The approach of hypergeometrization helps to understand some of the various transforms valid for these functions. For example, equating $x=y=1, t=1$ in the formula for $F_1$ function (\ref{F1}) we obtain
$$
F_1 \zav{\nadsebou{a}{c};\nadsebou{b_1\quad b_2}{-};x,x}=\!\! \ _2 F_1\zav{\nadsebou{a\quad b_1+b_2}{c};x},
$$
since 
$$
(1-x)^{-b_1}(1-x)^{-b_2}=(1-x)^{-(b_1+b_2)}.
$$
\end{example}
\begin{example}
Similarly, from the fact that
$$
(1-x)^{-b}(1+x)^{-b}=(1-x^2)^{-b},
$$
we can easily deduce using (\ref{Pochpower})
$$
F_1 \zav{\nadsebou{a}{c};\nadsebou{b\quad b}{-};x,-x}=\!\! \ _3 F_2\zav{\nadsebou{\frac{a}{2}\quad \frac{a+1}{2}\quad b}{\frac{c}{2}\quad \frac{c+1}{2}};x^2}.
$$
\end{example}
\begin{example}
The following elementary identity
\begin{equation}\label{elem}
(1-x-y)^{-a}=(1-x)^{-a}\zav{1-\frac{y}{1-x}}^{-a},
\end{equation}
implies a representation of Appell's $F_2$ function in the form
\begin{equation}\label{F2repr}
F_2\zav{\nadsebou{a}{-};\nadsebou{b_1}{c_1}\nadsebou{b_2}{c_2};x,y}=
\hypop_{c_1}^{b_1}(x)(1-x)^{-a}\!\!\ _2 F_1\zav{\nadsebou{a\quad b_2}{c_2};\frac{y}{1-x}}.
\end{equation}
The argument is as follows:
\begin{align*}
F_2\zav{\nadsebou{a}{-};\nadsebou{b_1}{c_1}\nadsebou{b_2}{c_2};x,y}&\stackrel{(\ref{F2})}{=}
\hypop_{c_1}^{b_1}(x)\hypop_{c_2}^{b_2}(y) (1-x-y)^{-a}\stackrel{(\ref{elem})}{=}
\hypop_{c_1}^{b_1}(x)\hypop_{c_2}^{b_2}(y)
(1-x)^{-a}\zav{1-\frac{y}{1-x}}^{-a}\\
&\stackrel{(\ref{linearity})}{=}
\hypop_{c_1}^{b_1}(x)(1-x)^{-a}  \hypop_{c_2}^{b_2}(y)\zav{1-\frac{y}{1-x}}^{-a}\\
&\stackrel{(\ref{argscaling})}{=}\hypop_{c_1}^{b_1}(x)(1-x)^{-a}  \hypop_{c_2}^{b_2}\zav{\frac{y}{1-x}}\zav{1-\frac{y}{1-x}}^{-a}\\
&\stackrel{(\ref{2F1})}{=}\hypop_{c_1}^{b_1}(x)(1-x)^{-a} \!\! \ _2 F_1\zav{\nadsebou{a\quad b_2}{c_2};\frac{y}{1-x}}.
\end{align*}
The question of when (\ref{F2repr}) holds is not trivial. But it perhaps worth noting that, in some sense, the equality (\ref{F2repr}) \textit{should be} valid whenever (\ref{elem}) is. We will not endeavor to make this statement precise.
 \end{example}
\begin{example}
Likewise, we can ask what relation of special functions is induced by the following elementary identity
\begin{equation}\label{elem2}
(1-x)^{-b_1}(1-y)^{-b_2}=\zav{\frac{y}{x}}^{-b_2}(1-x)^{-b_1-b_2}\zav{1-\frac{1-\frac{x}{y}}{1-x}}^{-b_2}.
\end{equation}
Changing the variables to $x\to tx$, $y\to ty$ we obtain

\begin{align*}
(1-tx)^{-b_1}(1-ty)^{-b_2}&=\zav{\frac{y}{x}}^{-b_2}(1-tx)^{-b_1-b_2}\zav{1-\frac{1-\frac{x}{y}}{1-tx}}^{-b_2}.\\
\intertext{Applying $\hypop_{c}^{a}(t)$ to both sides yields}
F_1\zav{\nadsebou{a}{c};\nadsebou{b_1\quad b_2 }{-};tx,ty}&=
\zav{\frac{y}{x}}^{-b_2}\hypop_{c}^{a}(t)(1-tx)^{-b_1-b_2}\zav{1-\frac{1-\frac{x}{y}}{1-tx}}^{-b_2}.\\
&=\zav{\frac{y}{x}}^{-b_2}\hypop_{c}^{a}(t)(1-tx)^{-b_1-b_2}\!\! \ _2 F_1\zav{\nadsebou{b_1+b_2\quad b_2}{b_1+b_2};\frac{1-\frac{x}{y}}{1-tx}}.\\
&\stackrel{(\ref{F2repr})}{=}\zav{\frac{y}{x}}^{-b_2}
F_2\zav{\nadsebou{b_1+b_2}{-};\nadsebou{a}{c}\nadsebou{b_2}{b_1+b_2};tx,1-\frac{x}{y}}.
\end{align*}
Altogether we find the following known relationship between Appell's $F_1$ and $F_2$ function:
\begin{equation}\label{F1toF2}
F_1\zav{\nadsebou{a}{c};\nadsebou{b_1\quad b_2}{-};x,y}=\zav{\frac{y}{x}}^{-b_2}F_2\zav{\nadsebou{b_1+b_2}{-};\nadsebou{a}{c}\nadsebou{b_2}{b_1+b_2};x,1-\frac{x}{y}}.
\end{equation}
\end{example}
\subsection{Horn's functions}
Similarly, we can deal with other multi-variable hypergeometric function. Including the Appell's functions there are altogether 28 function on Horn's list (see \cite{Erdelyi}). $G$-family of functions is defined as follows:
\begin{align}
G_1\zav{\nadsebou{a}{-};b_1\quad b_2;x,y}&:=\sum_{j,k=0}^{\infty}\frac{(a)_{j+k}}{j!k!}(b_1)_{j-k}(b_2)_{k-j}x^j y^k,\\
G_2\zav{a\quad c;\nadsebou{b_1\quad b_2}{-};x,y}&:=\sum_{j,k=0}^{\infty}(a)_{j-k}(c)_{k-j}\frac{(b_1)_{j}(b_2)_{k}}{j!k!}x^j y^k,\\
G_3\zav{a\quad c;x,y}&:=\sum_{j,k=0}^{\infty}\frac{(a)_{2j-k}(c)_{2k-j}}{j!k!}x^j y^k.
\end{align}
We are able to give a representation for $G_2$:
\begin{proposition}\label{G2prop}
For generic values of $a,c,b_1,b_2\in\mathbb{C}$ it holds:
\begin{equation}\label{G2repr}
G_2\zav{a\quad c;\nadsebou{b_1\quad b_2}{-};x,y}=\hypop_{1-c}^{b_1}(x)\hypop_{1-a}^{b_2}(y) (1+y)^{-c}(1+x)^{-a}(1-xy)^{c+a-1}.
\end{equation}
Therefore the double sum $G_2$ converges for
$$
\abs{y}<1,\qquad \abs{x}<1,\qquad \abs{xy}<1,
$$
\end{proposition}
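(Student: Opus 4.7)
The plan is to reduce the identity to an elementary generating-function identity and then invoke Pfaff--Saalschütz's theorem. Since $\hypop_{1-c}^{b_1}(x)\hypop_{1-a}^{b_2}(y)$ multiplies the $x^jy^k$-coefficient of any analytic function by $(b_1)_j(b_2)_k/\hzav{(1-c)_j(1-a)_k}$, the full statement is equivalent, by isolating the $b_j$-dependence, to the special case $b_1=1-c$, $b_2=1-a$:
\begin{equation*}
\sum_{j,k=0}^{\infty}(a)_{j-k}(c)_{k-j}\frac{(1-c)_j(1-a)_k}{j!k!}x^jy^k=(1+y)^{-c}(1+x)^{-a}(1-xy)^{c+a-1}.
\end{equation*}

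I would prove this reduced identity by comparing coefficients of $x^jy^k$. Expanding the three factors on the right as binomial series gives the coefficient as a single finite sum indexed by the exponent $m$ of $(1-xy)^{c+a-1}$. Both sides are symmetric under the exchange $(a,j,b_1,1-c)\leftrightarrow (c,k,b_2,1-a)$, so I may assume $j\geq k$ and set $n:=j-k$. Substituting $r=k-m$ and computing the term ratio --- the operative step is turning the negative shifts $k-r$ and $k-r-c-a$ into formal Pochhammer parameters with upper values $-k$ and $c+a-k$ respectively --- recasts the sum as a $\!\!\ _3F_2$ at unity with upper parameters $-k,\,a+n,\,c$ and lower parameters $c+a-k,\,n+1$.

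The crux is then one observation: $1+(-k)+(a+n)+c=(c+a-k)+(n+1)$, so this $\!\!\ _3F_2$ is Saalschützian and is summed in closed form by Pfaff--Saalschütz's theorem. A brief collection of Pochhammer-reflection identities such as $(c)_{-n}=(-1)^n/(1-c)_n$, $(1-c)_{k+n}=(1-c)_n(1-c+n)_k$, $(a-k)_k=(-1)^k(1-a)_k$, and $(c+a-k)_k=(-1)^k(1-c-a)_k$ then verifies that the two sides match. The convergence statement follows from Proposition~\ref{Radius}: the elementary right-hand side converges absolutely for $\abs{x}<1$, $\abs{y}<1$ (which automatically forces $\abs{xy}<1$), and hypergeometrization preserves radii of convergence. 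The only real difficulty I foresee is the careful bookkeeping with signs and shifted Pochhammers in the term-ratio calculation that exposes the Saalschützian structure; once that recognition is made, everything collapses onto a single application of Pfaff--Saalschütz.
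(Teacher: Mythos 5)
Your proposal is correct, and the opening reduction is exactly the paper's: both arguments observe that applying $\hypop_{1-c}^{b_1}(x)\hypop_{1-a}^{b_2}(y)$ only rescales the $x^jy^k$-coefficient by $(b_1)_j(b_2)_k/\hzav{(1-c)_j(1-a)_k}$, so everything hinges on the special case $b_1=1-c$, $b_2=1-a$, i.e.\ on the identity (\ref{toshow}). Where you diverge is in how that reduced identity is established. The paper sums the inner $k$-series to a $\!\!\ _2F_1$ evaluated at $-y$, applies the Euler transform (\ref{Euler}) to make one upper parameter equal to $-j$ (so the new series terminates), and then finishes with the rearrangement $j\to j+k$; this keeps the whole computation inside the machinery already built in the paper and never appeals to an external summation theorem. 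You instead compare coefficients of $x^jy^k$ directly: the finite sum over the exponent $m$ of $(1-xy)^{c+a-1}$, after the substitution $r=k-m$, becomes $\!\!\ _3F_2\zav{\nadsebou{-k\quad a+n\quad c}{c+a-k\quad n+1};1}$ with $n=j-k\geq 0$, which is Saalsch\"utzian since $1-k+a+n+c=(c+a-k)+(n+1)$, and Pfaff--Saalsch\"utz evaluates it as $(1-c+n)_k(1-a)_k/\hzav{(1-c-a)_k(1+n)_k}$; the reflection identities you list then reproduce $(a)_n(c)_{-n}(1-c)_j(1-a)_k/(j!k!)$ exactly. I checked this computation and it closes. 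Your route is more mechanical and self-contained modulo one classical summation theorem; the paper's route is less standard but demonstrates that the Euler property alone suffices, which is more in the spirit of the article. Your convergence argument (the elementary right-hand side converges for $\abs{x}<1$, $\abs{y}<1$, and hypergeometrization in each variable preserves the region) is also the paper's intended one.
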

\begin{proof}
To prove hypergeometric representation of $G_2$ and also its region of convergence, all we have to do is to show that
\begin{equation}\label{toshow}
G_2\zav{a\quad c;\nadsebou{1-c\quad 1-a}{-};x,y}=(1+y)^{-c}(1+x)^{-a}(1-xy)^{c+a-1}.
\end{equation}
Starting with
$$
G_2\zav{a\quad c;\nadsebou{1-c\quad 1-a}{-};x,y}=\sum_{j,k=0}^{\infty}(a)_{j-k}(c)_{k-j}\frac{(1-c)_j(1-a)_k}{j!k!}x^j y^k,
$$
and using the identities
$$
(a)_{j-k}=\frac{(a)_j}{(1-a-j)_k}(-1)^k,\qquad (c)_{k-j}=\frac{(-1)^{j}(c-j)_k}{(1-c)_j},
$$
we obtain
$$
=\sum_{j,k=0}^{\infty}\frac{(a)_j}{j!}\frac{(c-j)_k (1-a)_k}{(1-a-j)_kk!}(-x)^j (-y)^k=
\sum_{j=0}^{\infty}\frac{(a)_j}{j!}(-x)^j\!\! \ _2 F_1\zav{\nadsebou{c-j\quad 1-a}{1-a-j};-y}
$$
$$
\stackrel{(\ref{Euler})}{=}(1+y)^{-c}\sum_{j=0}^{\infty}\frac{(a)_j}{j!}(-x)^j\!\! \ _2 F_1\zav{\nadsebou{1-a-c\quad -j}{1-a-j};-y}=(1+y)^{-c}
\sum_{j,k=0}^{\infty}\frac{(a)_j}{(j-k)!}(-x)^j\frac{(1-a-c)_k}{(1-a-j)_k k!}y^k
$$
rearranging the terms $j\to j+k$ we obtain
$$
=(1+y)^{-c}
\sum_{j,k=0}^{\infty}\frac{(a)_{j}}{j!}(-x)^{j+k}\frac{(1-a-c)_k}{k!}(-y)^k=(1+y)^{-c}(1+x)^{-a}(1-xy)^{a+c-1}.
$$
\end{proof}  
The function $G_1$ can be represented via the following link with the $F_4$ function:
\begin{proposition}
$$
G_1\zav{\nadsebou{a}{-};b_1\quad b_2;x,y}=(1+x+y)^{-a}F_4\zav{\nadsebou{a\quad 1-b_1-b_2}{-};\nadsebou{-}{1-b_1\quad 1-b_2};\frac{y}{1+x+y},\frac{x}{1+x+y}},
$$
\end{proposition}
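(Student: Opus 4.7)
My plan is to prove the identity by matching the Taylor coefficients of $x^J y^K$ on both sides. On the left, the coefficient is manifestly
$$\frac{(a)_{J+K}}{J!\,K!}(b_1)_{J-K}(b_2)_{K-J},$$
so I must reproduce this from the right. First I would expand the $F_4$ series and substitute $u=y/(1+x+y)$, $v=x/(1+x+y)$; the prefactor $(1+x+y)^{-a}$ combines with $u^j v^k$ to yield $(1+x+y)^{-a-j-k}\,y^j x^k$. Expanding
$$(1+x+y)^{-a-j-k}=\sum_{m,n\geq 0}\frac{(a+j+k)_{m+n}(-1)^{m+n}}{m!\,n!}x^m y^n$$
produces a quadruple series in $(j,k,m,n)$. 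Collecting the coefficient of $x^J y^K$ forces $m=J-k$, $n=K-j$; using $\binom{J}{k}\binom{K}{j}=(-1)^{j+k}(-J)_k(-K)_j/(j!\,k!)$, the coefficient becomes
$$\frac{(a)_{J+K}(-1)^{J+K}}{J!\,K!}\sum_{j=0}^{K}\sum_{k=0}^{J}\frac{(1-b_1-b_2)_{j+k}(-J)_k(-K)_j}{(1-b_1)_j(1-b_2)_k\,j!\,k!}.$$

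The heart of the argument is the evaluation of this terminating double sum by two successive Chu--Vandermonde summations. Splitting $(1-b_1-b_2)_{j+k}=(1-b_1-b_2)_k\,(1-b_1-b_2+k)_j$ and summing first on $j$, the inner series is $\!\! \ _2F_1(-K,1-b_1-b_2+k;1-b_1;1)$, which Chu--Vandermonde evaluates to $(b_2-k)_K/(1-b_1)_K$. The Pochhammer reflection $(b_2-k)_K=(1-b_2)_k(b_2)_K/(1-b_2-K)_k$ then cancels the $(1-b_2)_k$ in the denominator, leaving a second $\!\! \ _2F_1(-J,1-b_1-b_2;1-b_2-K;1)$ in the variable $k$; another Chu--Vandermonde application yields the closed form
$$\frac{(b_1)_J(b_2)_K}{(1-b_1-J)_K(1-b_2-K)_J}.$$

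To complete the matching I would apply the symmetric reflection identities
$$(b_1)_{J-K}=\frac{(-1)^K(b_1)_J}{(1-b_1-J)_K},\qquad (b_2)_{K-J}=\frac{(-1)^J(b_2)_K}{(1-b_2-K)_J},$$
which convert the closed form above into $(-1)^{J+K}(b_1)_{J-K}(b_2)_{K-J}$; cancelling the external $(-1)^{J+K}$ reproduces the $G_1$ coefficient exactly. The main obstacle is strictly bookkeeping: handling Pochhammers with shifted or negative indices, especially the identity $(b-k)_K=(1-b)_k(b)_K/(1-b-K)_k$, so that the two inner hypergeometric sums end up in the shape required by Chu--Vandermonde. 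Convergence of the rearranged multiple series is immediate in a common neighbourhood of the origin, since both sides are absolutely convergent there.
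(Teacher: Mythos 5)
Your argument is correct, and it is worth noting that the paper itself offers no proof to compare it with: the author explicitly states this proposition ``without proof only as a curiosity,'' so your derivation actually fills a gap rather than duplicating one. I checked the key steps: the coefficient of $x^Jy^K$ on the right does reduce, after the substitution $m=J-k$, $n=K-j$ and the binomial--Pochhammer conversion, to the terminating double sum you display; splitting $(1-b_1-b_2)_{j+k}=(1-b_1-b_2)_k(1-b_1-b_2+k)_j$ and applying Chu--Vandermonde to the inner ${}_2F_1(-K,\,1-b_1-b_2+k;\,1-b_1;\,1)$ gives $(b_2-k)_K/(1-b_1)_K$; the reflection $(b_2-k)_K=(1-b_2)_k(b_2)_K/(1-b_2-K)_k$ is valid and cancels the $(1-b_2)_k$, leaving a second terminating ${}_2F_1(-J,\,1-b_1-b_2;\,1-b_2-K;\,1)$ that evaluates to $(b_1-K)_J/(1-b_2-K)_J$ and hence to your closed form $(b_1)_J(b_2)_K/\bigl((1-b_1-J)_K(1-b_2-K)_J\bigr)$; and the final reflections $(b_1)_{J-K}=(-1)^K(b_1)_J/(1-b_1-J)_K$, $(b_2)_{K-J}=(-1)^J(b_2)_K/(1-b_2-K)_J$ exactly absorb the external $(-1)^{J+K}$. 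Since both inner sums terminate, Chu--Vandermonde applies without convergence caveats, and the rearrangement of the quadruple series is justified by absolute convergence near the origin. The only thing I would add is an explicit genericity remark: the reflection identities and the denominators $(1-b_1)_j$, $(1-b_2)_k$, $(1-b_1-J)_K$, $(1-b_2-K)_J$ require $b_1,b_2$ to avoid integer values, consistent with the ``generic parameters'' convention used elsewhere in the paper.
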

which we state without proof only as a curiosity. At the moment the author is not aware of any simple representation of the $G_3$ functions.

There are more functions from the Horn's list that have very nice representation, namely the $H_4$ function and functions $\Phi_1$, $\Phi_2$, $\Phi_3$ defined as
\begin{align}
H_4\zav{a;\nadsebou{-}{c}\nadsebou{b}{d};x,y}&:=\sum_{j,k=0}^{\infty}\frac{(a)_{2j+k}}{j!k!}\frac{(b)_{k}}{(c)_{j}(d)_k}x^j y^k,\\
\Phi_1\zav{\nadsebou{a}{c};\nadsebou{-}{-}\nadsebou{b}{-};x,y}&:=\sum_{j,k=0}^{\infty}\frac{(a)_{j+k}(b)_{j}}{(c)_{j+k}j!k!}x^j y^k,\\
\Phi_2\zav{\nadsebou{-}{c};\nadsebou{b_1\quad b_2}{-};x,y}&:=\sum_{j,k=0}^{\infty}\frac{(b_1)_{j}(b_2)_{k}}{(c)_{j+k}j!k!}x^j y^k,\\
\Phi_3\zav{\nadsebou{-}{c};\nadsebou{b}{-}\nadsebou{-}{-};x,y}&:=\sum_{j,k=0}^{\infty}\frac{(b)_{j}}{(c)_{j+k}j!k!}x^j y^k.
\end{align}
\begin{proposition}\label{H4prop}
For generic values of parameters it holds:
\begin{align}
\label{H4repr}\hypop_{d}^b(y)\hypop_{c}^{\frac{a+1}{2}}(x)\zav{(1-y)^2-4x}^{-\frac{a}{2}}&=H_4\zav{a;\nadsebou{-}{c}\nadsebou{b}{d};x,y}.\\
\hypop_{c}^{a}(t)e^{tx}(1-ty)^{-b}&
\label{Phi1repr}=\Phi_1\zav{\nadsebou{a}{c};\nadsebou{-}{-}\nadsebou{b}{-};tx,ty}.\\
\label{Phi2repr}\hypop_{c}^{c-b_2}(t)\hypop_{c-b_2}^{b_1}(x)e^{t(x-y)}&=e^{-ty}\Phi_2\zav{\nadsebou{-}{c};\nadsebou{b_1\quad b_2}{-};tx,ty}.\\
\label{Phi3repr}\hypop_{c}^{c-b}(t)\hypop_{c-b}^{\frac12}(y)\cosh(2\sqrt{ty})e^{-tx}&=e^{-tx}\Phi_3\zav{\nadsebou{-}{c};\nadsebou{b}{-}\nadsebou{-}{-};tx,ty}.
\end{align}
\end{proposition}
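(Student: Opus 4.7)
My overall plan is to verify each of the four representations by direct series expansion, exploiting the basic fact that $\hypop_{c}^{a}$ simply inserts the factor $(a)_n/(c)_n$ on the $n$-th power of its variable. The $\Phi_1$ formula is almost immediate: expanding $e^{tx}(1-ty)^{-b} = \sum_{j,k} \frac{(b)_k}{j!\,k!}\, x^j y^k t^{j+k}$ and applying $\hypop_c^a(t)$ inserts $(a)_{j+k}/(c)_{j+k}$ on $t^{j+k}$, which yields the defining series of $\Phi_1(tx,ty)$ on the nose.

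For $H_4$, I would start by factoring $((1-y)^2 - 4x)^{-a/2} = (1-y)^{-a}(1 - 4x/(1-y)^2)^{-a/2}$, then expand both binomial factors as power series in $x$ and $y$. The Pochhammer identity $(a)_{2j} = 4^j (a/2)_j ((a+1)/2)_j$ (a special case of (\ref{Pochpower})) is the key algebraic ingredient: after the $x$-expansion, the coefficient of $x^j y^k$ contains $4^j (a/2)_j (a+2j)_k/(j!\,k!)$, and multiplying and dividing by $((a+1)/2)_j$ converts $4^j(a/2)_j(a+2j)_k$ into $(a)_{2j+k}/((a+1)/2)_j$. Applying $\hypop_c^{(a+1)/2}(x)$ then cancels the stray $((a+1)/2)_j$ and replaces it with $1/(c)_j$, while $\hypop_d^b(y)$ introduces $(b)_k/(d)_k$. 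The result is exactly the defining series of $H_4$.

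For $\Phi_2$ and $\Phi_3$ the argument is structurally similar but has one extra wrinkle: the claimed right-hand side carries a stray exponential $e^{-ty}$ (respectively $e^{-tx}$) that is not cancelled by the elementary function. Taking $\Phi_2$ as the model case, I would split $e^{t(x-y)} = e^{-ty}e^{tx}$, apply $\hypop_{c-b_2}^{b_1}(x)$ to the $x$-factor, and then expand $e^{-ty}$ to produce a double series in $t^{j+k} x^j y^k$ to which $\hypop_c^{c-b_2}(t)$ can be applied term-by-term; after the telescoping $(c-b_2)_{j+k}/(c-b_2)_j = (c-b_2+j)_k$, the LHS becomes $\sum_{j,k} \frac{(b_1)_j (c-b_2+j)_k}{(c)_{j+k}\, j!\, k!} (tx)^j (-ty)^k$. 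To match this against $e^{-ty}\Phi_2(tx,ty)$, I would multiply out the RHS as a triple sum, collect on the total power of $y$, and recognize the resulting inner sum as a terminating $\!\!\ _2 F_1(-m, b_2; c+j; 1)$, which by the Chu--Vandermonde summation evaluates to $(c+j-b_2)_m/(c+j)_m$; this brings the RHS into the same form as the LHS. The $\Phi_3$ case is entirely analogous, after using $(2k)! = 4^k (1/2)_k\, k!$ to write $\cosh(2\sqrt{ty}) = \sum_k (ty)^k/((1/2)_k k!)$, so that $\hypop_{c-b}^{1/2}(y)$ reduces the $y$-part to a clean geometric-style series and the same Chu--Vandermonde trick closes the argument.

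The main non-obstacle is therefore careful bookkeeping of Pochhammer factors; the only nonelementary ingredient is Chu--Vandermonde, which is a trivial specialization of Gauss's theorem. All manipulations are formal at the level of coefficients, and the identities extend to the claimed regions of convergence by the radius-preservation result in Proposition \ref{Radius}.
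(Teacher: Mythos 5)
Your proof is correct, and at bottom it rests on the same computations as the paper's, but it is organized differently. The paper reduces the first three identities to parameter-coincidence special cases (e.g.\ $e^{-ty}\Phi_2\bigl(\begin{smallmatrix}-\\ b_1+b_2\end{smallmatrix};b_1,b_2;tx,ty\bigr)={}_1F_1\bigl(\begin{smallmatrix}b_1\\ b_1+b_2\end{smallmatrix};t(x-y)\bigr)$) and leaves their verification to the reader, then lifts to general parameters by applying the appropriate hypergeometrizations; you instead verify all four identities head-on at the level of coefficients, and in doing so you supply exactly the missing content of those ``reader exercises'' --- in particular you correctly identify Chu--Vandermonde as the combinatorial engine behind the $e^{-ty}$ (resp.\ $e^{-tx}$) rearrangement in the $\Phi_2$ and $\Phi_3$ cases, and the identity $(a)_{2j}=4^j(a/2)_j((a+1)/2)_j$ as the engine behind $H_4$. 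Where you genuinely diverge is $\Phi_3$: the paper proves it by a chain of operator manipulations using the Kummer transform (\ref{Kummer}) together with the shift property (\ref{shift}), whereas you run the same direct Chu--Vandermonde computation as for $\Phi_2$. The two are equivalent in substance (Kummer's transform is the generating-function form of Chu--Vandermonde), but your treatment is uniform across all four cases and more self-contained, at the cost of more Pochhammer bookkeeping; the paper's buys brevity by leaning on named classical identities. One cosmetic caveat: your expansion of $e^{tx}(1-ty)^{-b}$ attaches $(b)_k$ to the $y$-power, while the paper's series definition of $\Phi_1$ attaches $(b)_j$ to the $x$-power, so the match is ``on the nose'' only up to swapping the roles of the two arguments --- this is an inconsistency in the paper's own notation rather than an error in your argument, but it is worth flagging rather than asserting exact agreement.
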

\begin{proof} For the first three representations it suffices to establish the following special cases:
\begin{align*}
H_4\zav{a;\nadsebou{-}{\frac{a+1}{2}}\nadsebou{d}{d};x,y}&=\zav{(1-y)^2-4x}^{-\frac{a}{2}}.\\
\Phi_1\zav{\nadsebou{c}{c};\nadsebou{-}{-}\nadsebou{b}{-};tx,ty}&=e^{tx}(1-ty)^{-b}.\\
e^{-ty}\Phi_2\zav{\nadsebou{-}{b_1+b_2};\nadsebou{b_1\quad b_2}{-};tx,ty}&=\!\! \ _1 F_1\zav{\nadsebou{b_1}{b_1+b_2};t(x-y)},
\end{align*}
which are left to the reader.
The last representation can be proved as follows:
\begin{align*}
e^{-tx}\Phi_3\zav{\nadsebou{-}{c};\nadsebou{b}{-}\nadsebou{-}{-};tx,ty}&=e^{-tx}\sum_{j,k} \frac{(b)_j}{(c)_{j+k}}\frac{(tx)^j (ty)^k}{j!k!}= e^{-tx}\sum_{k} \frac{(ty)^k}{(c)_{k} k!}\!\! \ _1 F_1\zav{\nadsebou{b}{c+k};xt}.\\
\intertext{Using the well known Kummer transform
\begin{equation}\label{Kummer}
\!\! \ _1 F_1\zav{\nadsebou{a}{c};x}=e^{x}\!\! \ _1 F_1\zav{\nadsebou{c-a}{c};-x},
\end{equation}
we obtain}
e^{-tx}\Phi_3\zav{\nadsebou{-}{c};\nadsebou{b}{-}\nadsebou{-}{-};tx,ty}&\stackrel{(\ref{Kummer})}{=}\sum_{k} \frac{(ty)^k}{(c)_{k} k!}\!\! \ _1 F_1\zav{\nadsebou{c-b+k}{c+k};-xt}
\serovna{1F1}\sum_{k} \frac{(ty)^k}{(c)_{k} k!}\hypop_{c+k}^{c-b+k}(t)e^{-xt}
\\
&\serovna{shift} \sum_{k} \hypop_{c}^{c-b}(t) \frac{(yt)^k}{(c-b)_{k} k!} e^{-xt}
=\hypop_{c}^{c-b}(t) \hypop_{c-b}^{\frac12}(y)\sum_{k}  \frac{(yt)^k}{\zav{\frac12}_{k} k!} e^{-xt}\\
&=\hypop_{c}^{c-b}(t) \hypop_{c-b}^{\frac12}(y)\cosh\zav{2\sqrt{yt}}e^{-xt}.
\end{align*}

\end{proof}

\section{Pfaff property}\label{Pfaffpropertysec}
\begin{proposition}
Let 
$$
y(x):=\frac{x}{x-1}.
$$ 
Then
\begin{equation}\label{Pfaffproperty}
(1-x)^{a}\hypop_{c}^{a}(x) (1-x)^{-c}=\hypop_{c}^a(y).
\end{equation}
\end{proposition}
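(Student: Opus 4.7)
The plan is to interpret the operator identity as follows: for every analytic germ $g(y)$ at the origin,
$$(1-x)^{a}\,\hypop_{c}^{a}(x)\,\bigl[(1-x)^{-c}\,g(y(x))\bigr] \;=\; \bigl[\hypop_{c}^{a}g\bigr](y(x)),\qquad y(x):=\tfrac{x}{x-1}.$$
The substitution $x\mapsto y(x)$ is an analytic involution fixing the origin, so $\{y^n:n\geq 0\}$ is a topological basis of $C^{\omega}$ when viewed through the variable $x$. By linearity and the fact that hypergeometrization preserves the radius of convergence (Proposition \ref{Radius}), it is enough to verify the identity on $g(y)=y^n$ for every $n\geq 0$.

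For $g(y)=y^n$ the right-hand side is simply $\tfrac{(a)_n}{(c)_n}\,y(x)^n=\tfrac{(a)_n}{(c)_n}(-1)^n x^n(1-x)^{-n}$. Writing $y(x)^n=(-1)^n x^n(1-x)^{-n}$ and cancelling the common sign and factor $(1-x)^a$, the identity for monomials reduces to
$$\hypop_{c}^{a}(x)\bigl[x^n(1-x)^{-(n+c)}\bigr] \;=\; \tfrac{(a)_n}{(c)_n}\,x^n(1-x)^{-(n+a)}. \qquad (\star)$$

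I would prove $(\star)$ in two quick steps. First, pull the factor $x^n$ through the hypergeometrization using the shift property (\ref{shift}):
$$\hypop_{c}^{a}(x)\bigl[x^n(1-x)^{-(n+c)}\bigr]=\tfrac{(a)_n}{(c)_n}\,x^n\,\hypop_{c+n}^{a+n}(x)(1-x)^{-(n+c)}.$$
Second, invoke the defining identity (\ref{2F1}): since the upper and the lower $_2F_1$ parameters now coincide,
$$\hypop_{c+n}^{a+n}(x)(1-x)^{-(n+c)}={}_{2}F_{1}\!\left(\nadsebou{a+n\quad c+n}{c+n};x\right)=(1-x)^{-(a+n)},$$
which is exactly the right-hand side of $(\star)$. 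Reinserting the factors $(-1)^n$ and $(1-x)^a$ recovers $\tfrac{(a)_n}{(c)_n}y(x)^n$.

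I do not anticipate any serious obstacle in this proof: the decisive "miracle" is that after the shift the upper parameter of ${}_2F_1$ exactly cancels its lower parameter, reducing the whole statement to the trivial binomial identity ${}_2F_1(A,C;C;x)=(1-x)^{-A}$. The only point requiring minor care is the justification that verifying the identity on the basis $\{y^n\}$ really does establish it for every $f\in C^{\omega}$; this is immediate because $y(x)$ is an analytic involution at the origin, so a power series in $y$ converges near $x=0$ iff the corresponding series in $x$ does, and both sides of the proposed identity are continuous linear operators on $C^{\omega}$.
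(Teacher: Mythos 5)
Your proposal is correct and follows essentially the same route as the paper: verify the identity on the monomials $y^n$, pull $x^n$ through with the shift property (\ref{shift}), and observe that the resulting $\hypop_{c+n}^{a+n}(1-x)^{-(c+n)}$ is a ${}_2F_1$ with coinciding upper and lower parameter, hence equals $(1-x)^{-(a+n)}$. The paper's proof is precisely this computation, so no further comparison is needed.
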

\begin{proof}
Clearly, it is enough to check the claim on monomials.
\begin{align*}
(1-x)^{a}\hypop_{c}^{a} (1-x)^{-c} y(x)^n&=(1-x)^{a}\hypop_{c}^{a} (-x)^n (1-x)^{-{c+n}}
\serovna{shift} (1-x)^{a}(-x)^n\frac{(a)_n}{(c)_n}\hypop_{c+n}^{a+n} (1-x)^{-{c+n}}\\
&\serovna{2F1}(1-x)^{a}(-x)^n\frac{(a)_n}{(c)_n}\!\! \ _2 F_1\zav{\nadsebou{c+n\quad a+n}{c+n};x}\\
&=(1-x)^{a}(-x)^n\frac{(a)_n}{(c)_n}(1-x)^{-a-n}=\frac{(a)_n}{(c)_n}y^n.
\end{align*}
\end{proof}

\begin{example}\label{Pfaffex1} 
A consequence of the following elementary identity
$$
(1-x)^{-b}=(1-x)^{-c}\zav{1+\frac{x}{1-x}}^{b-c}=(1-x)^{-c}(1-y)^{b-c},\qquad y:=\frac{x}{x-1},
$$
is a well known identity called ``Pfaff transform'' \cite[15.8.1]{dlmf15.8}:
\begin{equation}\label{Pfaff}
\!\! \ _2 F_1\zav{\nadsebou{a \quad b}{c};x}=(1-x)^{-a}\!\! \ _2 F_1\zav{\nadsebou{a \quad c-b}{c};\frac{x}{x-1}}.\qquad \text{(Pfaff transform.)}
\end{equation} 
The argument is as follows:
\begin{align*}
\!\! \ _2 F_1\zav{\nadsebou{a\quad b}{c};x}&\serovna{2F1}\hypop_c^{a}(1-x)^{-b}
=\hypop_c^{a}(1-x)^{-c}\zav{1+\frac{x}{1-x}}^{b-c}\\
&\serovna{Pfaffproperty}(1-x)^{-a}\hypop_c^a(y)(1-y)^{b-c}\serovna{2F1}(1-x)^{-a}\!\! \ _2 F_1\zav{\nadsebou{a\quad c-b}{c};y}\\
&=(1-x)^{-a}\!\! \ _2 F_1\zav{\nadsebou{a\quad c-b}{c};\frac{x}{x-1}}.
\end{align*}
Notice that this transform applied twice lead back to the original function. In other words, the Pfaff transform is an involution. There is an additional obvious involution related to the fact that the function $\!\! \ _2 F_1$ is symmetrical with respect to the upper parameters $a$, $b$:
\begin{equation}\label{Swap}
\!\! \ _2 F_1\zav{\nadsebou{a \quad b}{c};x}=\!\! \ _2 F_1\zav{\nadsebou{b \quad a}{c};x}.\qquad \text{(Parameter swap.)}
\end{equation}
If we combine these -- i.e. we first perform Pfaff transforms, then swap the upper parameters and then Pfaff transform again, we discover new identity, called ``Euler transform'' \cite[15.8.1]{dlmf15.8}:
\begin{equation}\label{Euler}
\!\! \ _2 F_1\zav{\nadsebou{a \quad b}{c};x}=(1-x)^{c-a-b}\!\! \ _2 F_1\zav{\nadsebou{c-a \quad c-b}{c};x}.\qquad \text{(Euler transform.)}
\end{equation}

\end{example}
\begin{example}\label{Pfaffex2}
The same argument can be used to derive similar transform for the $F_1$ Appell's function. Starting from the identity
$$
(1-t x)^{-b_1}(1-ty)^{-b_2}=(1-t x)^{-c}\zav{1-\frac{tx}{tx-1}}^{b_1+b_2-c}\zav{1-\frac{tx}{tx-1}\frac{x-y}{x}}^{-b_2},
$$
we apply $\hypop^{a}_{c}(t)$ on both sides to get:
\begin{align*}
(LHS)&=\hypop^{a}_{c}(t)(1-t x)^{-b_1}(1-ty)^{-b_2}\serovna{F1}F_1\zav{\nadsebou{a}{c};\nadsebou{b_1\quad b_2}{-};tx,ty}.\\
(RHS)&=\hypop^{a}_{c}(t)(1-tx)^{-c}\zav{1-\frac{tx}{tx-1}}^{b_1+b_2-c}\zav{1-\frac{tx}{tx-1}\frac{x-y}{x}}^{-b_2}\\
&\serovna{argscaling}\hypop^{a}_{c}(tx)(1-tx)^{-c}\zav{1-\frac{tx}{tx-1}}^{b_1+b_2-c}\zav{1-\frac{tx}{tx-1}\frac{x-y}{x}}^{-b_2}\\
&\serovna{Pfaffproperty}(1-tx)^{-a}\hypop^{a}_{c}\zav{z}\zav{1-z}^{b_1+b_2-c}\zav{1-z\frac{x-y}{x}}^{-b_2} \qquad \zav{z:=\frac{xt}{xt-1}}\\
&\serovna{F1}(1-xt)^{-a}F_1\zav{\nadsebou{a}{c};\nadsebou{c-b_1-b_2\quad b_2}{-};z,z\frac{x-y}{x}}.
\end{align*} 
Putting $t=1$ we thus obtain:
\begin{equation}\label{F1Pfaff}
F_1\zav{\nadsebou{a}{c};\nadsebou{b_1\quad b_2}{-};x,y}=
(1-x)^{-a}F_1\zav{\nadsebou{a}{c};\nadsebou{c-b_1-b_2\quad b_2}{-};\frac{x}{x-1},\frac{x-y}{x-1}}.
\end{equation} 
\end{example}
\begin{example}
Generally, we can use the identity
$$
\prod_{i=1}^n(1-tx_i)^{-b_i}=(1-tx_1)^{-c}\zav{1-\frac{tx_1}{tx_1-1}}^{b_1+\dots+b_n-c}\prod_{i=2}^n\zav{1-\frac{tx_1}{tx_1-1}\frac{x_1-x_i}{x_1}}^{-b_i},
$$
to obtain
\begin{equation}\label{F1Pfaffgen}
F_1\zav{\nadsebou{a}{c};\nadsebou{{\bf b}}{-};t{\bf x}}=
(1-x)^{-a}F_1\zav{\nadsebou{a}{c};\nadsebou{c-\sum_i b_i\quad b_2\dots b_n}{-};\frac{x_1}{x_1-1},\frac{x_1-x_2}{x-1},\dots,\frac{x_1-x_n}{x_1-1}},
\end{equation} 
where the $F_1$ function is the multivariate generalization of $F_1$ Appell's function  defined by
$$
F_1\zav{\nadsebou{a}{c};\nadsebou{{\bf b}}{-};t{\bf x}}:=\hypop_{c}^{a}(t)(1-tx_1)^{-b_1}\cdots (1-tx_n)^{-b_n},
$$
where ${\bf b},{\bf x}\in\mathbb{R}^n$ such that  ${\bf b}:=(b_1,\dots,b_n)$, ${\bf x}:=(x_1,\dots, x_n)$. 
Notice that $n=1$ corresponds to Gauss's hypergeometric fucntion $\!\! \ _2 F_1$ and $n=2$ corresponds to $F_1$ Appell's function. 
Details are left to the reader.
\end{example}
\begin{example}\label{Pfaffex3}
Perhaps surprisingly, we can also derive \textit{a quadratic} transform for $\!\! \ _2 F_1$. Using
$$
(1-2x)^{-b}=(1-x)^{-2b}\zav{1-\zav{\frac{x}{1-x}}^2}^{-b},
$$
we have
\begin{align*}
\!\! \ _2 F_1\zav{\nadsebou{ a\quad  b}{2b};2x}&\serovna{2F1}\hypop_{2b}^a(1-2x)^{-b}=
\hypop_{2b}^a (1-x)^{-2b}\zav{1-\zav{\frac{x}{1-x}}^2}^{-b}\\
&\serovna{Pfaffproperty}(1-x)^{-a}\hypop_{2b}^a(y)\zav{1-y^2}^{-b}\serovna{secondpower}
(1-x)^{-a}\hypop_{b}^{\frac{a}{2}}\zav{y^2}\hypop_{b+\frac12}^{\frac{a+1}{2}}\zav{y^2}\zav{1-y^2}^{-b}\\
&\serovna{2F1}(1-x)^{-a} \hypop_{b}^{\frac{a}{2}}\zav{y^2} \!\! \ _2 F_1\zav{\nadsebou{b\quad \frac{a+1}{2}}{b+\frac12};y^2}\serovna{2F1}(1-x)^{-a} \hypop_{b}^{\frac{a}{2}}\zav{y^2}\hypop_{b+\frac12}^{b}\zav{y^2}\zav{1-y^2}^{-\frac{a+1}{2}}\\
&\serovna{exchange}(1-x)^{-a} \hypop_{b}^{b}\zav{y^2}\hypop_{b+\frac12}^{\frac{a}{2}}\zav{y^2}\zav{1-y^2}^{-\frac{a+1}{2}}\serovna{2F1}
(1-x)^{-a}\!\! \ _2 F_1\zav{\nadsebou{\frac{a}{2}\quad \frac{a+1}{2}}{b+\frac12};y^2}. 
\end{align*}
Thus we obtained a well known identity:
\begin{equation}\label{2F1q}
\!\! \ _2 F_1\zav{\nadsebou{ a\quad  b}{2b};2x}=(1-x)^{-a}\!\! \ _2 F_1\zav{\nadsebou{\frac{a}{2}\quad \frac{a+1}{2}}{b+\frac12};\zav{\frac{x}{1-x}}^2}.
\end{equation}

\end{example}

\begin{example}\label{Pfaffex4}
A similar elementary identity for the third power, i.e.
$$
(1-zx)^{-b}(1-\bar z x)^{-b}=(1-x)^{-3b}\zav{1+\zav{\frac{x}{1-x}}^3}^{-b},\qquad z+\bar z=3,\ z\bar z=3,
$$
does not gives us a cubic transform of $\!\! \ _2 F_1$ but $F_1$ to $\!\! \ _3 F_2$ reduction, i.e. taking $\hypop_{3b}^a$ of both sides we get:
\begin{equation}\label{F1to3F2}
F_1\zav{\nadsebou{a}{3b};\nadsebou{b\quad b}{-};zx,\bar z x}=(1-x)^{-a}\!\! \ _3 F_2\zav{\nadsebou{ \frac{a}{3}\quad \frac{a+1}{3}\quad \frac{a+2}{3} }{b+\frac13\quad b+\frac23};\zav{\frac{x}{x-1}}^3}.
\end{equation} 
Again, the details are left to the reader.
\end{example}

\begin{example}Once again, we can attempt to generalize this result to multivariate $F_1$ function. From:
$$
\prod_{i=1}^{n-1}(1-(1-z_{i}) x)^{-b}=(1-x)^{-nb}\zav{1-\zav{\frac{x}{x-1}}^n}^{-b},\qquad z_k:=e^{\frac{2\pi\ii k}{n}},
$$
we get the following identity:
\begin{equation}\label{F1gentonFn1}
F_1\zav{\nadsebou{a}{nb};\nadsebou{b\cdots b}{-};(1-z_1)x,\dots,(1-z_{n-1})x}=
(1-x)^{-a}\!\! \ _{n} F_{n-1}\zav{\nadsebou{\frac{a}{n}\dots \frac{a+n-1}{n}}{b+\frac{1}{n}\dots b+\frac{n-1}{n}};\zav{\frac{x}{x-1}}^n}.
\end{equation}
\end{example}
\begin{example}\label{Pfaffex5}
Furthermore, with the aid of the Pfaff property (\ref{Pfaffproperty}) we can establish an alternative representation for $F_1$ function involving only single use of hypergeometrization.
\begin{equation}\label{F1alt}
F_1\zav{\nadsebou{a}{c};\nadsebou{b_1\quad b_2}{-};x,y}=\hypop_{c-b_2}^{b_1}(x)(1-x)^{-a}\!\! \ _2 F_1\zav{\nadsebou{a\quad b_2}{c};\frac{y-x}{1-x}}.
\end{equation}
The argument is as follows:
\begin{align*}
F_1\zav{\nadsebou{a}{c};\nadsebou{c-b_2\quad b_2}{-};x,y}&\stackrel{(\ref{F1Pfaff})}{=}(1-x)^{-a} F_1\zav{\nadsebou{a}{c};\nadsebou{0\quad b_2}{-};\frac{x}{x-1},\frac{y-x}{1-x}}=(1-x)^{-a}\!\! \ _2 F_1\zav{\nadsebou{a\quad b_2}{c};\frac{y-x}{1-x}}.
\end{align*}
Now just apply $\hypop_{b_1}^{c-b_2}(x)$ to both sides.

This representation allows us, for instance, to easily see that the following identity holds: 
\begin{equation}\label{F1(x,1)}
F_1\zav{\nadsebou{a}{c};\nadsebou{b_1\quad b_2}{-};x,1}=\frac{\Gamma(c)\Gamma(c-a-b_2)}{\Gamma(c-a)\Gamma(c-b_2)}\!\! \ _2 F_1\zav{\nadsebou{a\quad b_1}{c-b_2};x},\qquad \Re(c-a-b_2)>0.
\end{equation}

Just put $y=1$ and use the well known Gauss's summation formula (see \cite[15.4.20]{dlmf15.4})!
\begin{equation}\label{2F1(1)}
\!\! \ _2 F_1\zav{\nadsebou{a\quad b}{c};1}=\frac{\Gamma(c)\Gamma(c-a-b)}{\Gamma(c-a)\Gamma(c-b)},\qquad \Re(c-a-b)>0.
\end{equation}
\end{example}

\begin{example}\label{Pfaffex6}
We can also obtain some transform for $F_2$ function. Take the following identity 
$$
(1-x-y)^{-a}=(1-x)^{-a}(1-y)^{-a}\zav{1-\frac{xy}{(1-x)(1-y)}}^{-a},
$$
and apply operators $\hypop_{a}^{b_1}(x)\hypop_{a}^{b_2}(y)$ on both sides.
\begin{align*}
(LHS)&=\hypop_{a}^{b_1}(x)\hypop_{a}^{b_2}(y)(1-x-y)^{-a}\serovna{F2}F_2\zav{\nadsebou{a}{-};\nadsebou{b_1}{a}\nadsebou{b_1}{a};x,y}.\\
(RHS)&=\hypop_{a}^{b_1}(x)\hypop_{a}^{b_2}(y)(1-x)^{-a}(1-y)^{-a}\zav{1-\frac{xy}{(1-x)(1-y)}}^{-a}\\
&\serovna{linearity}\hypop_{a}^{b_1}(x)(1-x)^{-a}\hypop_{a}^{b_2}(y)(1-y)^{-a}\zav{1-\frac{xy}{(1-x)(1-y)}}^{-a}\\
&\serovna{Pfaffproperty}(1-x)^{-b_1}(1-y)^{-b_2}\hypop_{a}^{b_1}\zav{\tilde x}\hypop_{a}^{b_2}(\tilde y)\zav{1-\tilde x\tilde y}^{-a},\qquad \zav{\tilde x:=\frac{x}{x-1},\ \tilde y:=\frac{y}{y-1}}\\
&\serovna{2F1}(1-x)^{-b_1}(1-y)^{-b_2}\!\! \ _2 F_1\zav{\nadsebou{b_1\quad b_2}{a};\tilde x\tilde y}.
\end{align*}
Altogether we have
\begin{equation}\label{F2to2F1}
F_2\zav{\nadsebou{a}{-};\nadsebou{b_1}{a}\nadsebou{b_1}{a};x,y}=(1-x)^{-b_1}(1-y)^{-b_2}\!\! \ _2 F_1\zav{\nadsebou{b_1\quad b_2}{a};\frac{xy}{(x-1)(y-1)}}.
\end{equation}
\end{example}
\begin{example}
We will now compute the following link between $G_2$ and $F_2$ functions:
\begin{equation}\label{G2toF2}
G_2\zav{a\quad c; \nadsebou{b_1\quad b_2}{-};x,y}=(1+x)^{-b_1}(1+y)^{-b_2}F_2\zav{\nadsebou{1-c-a}{-};\nadsebou{b_1}{1-c}\nadsebou{b_2}{1-a};\frac{x}{x+1},\frac{y}{y+1}}.
\end{equation}
Once again, there is an elementary identity in behind the transform:
\begin{equation}\label{G2toF2el}
(1+y)^{-c}(1+x)^{-a}(1-xy)^{c+a-1}
=(1+y)^{a-1}(1+x)^{c-1}\zav{1-\frac{y}{y+1}-\frac{x}{x+1}}^{c+a-1}.
\end{equation}
To prove (\ref{G2toF2}) simply apply $\hypop_{1-c}^{b_1}(x)\hypop_{b_2}^{1-a}(y)$ on both sides of (\ref{G2toF2el}) and use the Pfaff property when appropriate.  
\end{example}
\section{Euler property}\label{S8}
Remember that Euler transform (\ref{Euler}) of $\!\! \ _2 F_1$ function can be obtained by applying the Pfaff transform (\ref{Pfaff}) twice (with a swapping of parameters). The same procedure can be also applied on the level of hypergeoemtrization:
\begin{proposition}\label{Eulerpropertyprop}
Let $a,\ b, \ c\in \C$, such that $1-c\not \in \N$. Then on functions analytic near origin it holds:
\begin{equation}\label{Eulerproperty}
(1-x)^{a+b-c}\hypop_{c}^a(1-x)^{-b}=\hypop_{c}^{c-b}(1-x)^{-(c-a)}\hypop_{c-b}^{a}.
\end{equation}
\end{proposition}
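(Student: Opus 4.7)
Both sides of (\ref{Eulerproperty}) are linear operators on $C^\omega$, so by linearity it suffices to verify the identity on the monomials $f(x)=x^n$ for $n\in\mathbb{N}_0$. The strategy is to use the shift identity (\ref{shift}) to pull $x^n$ past every hypergeometrization appearing on both sides, reducing the claim to an identity between two $\!\! \ _2 F_1$'s, which turns out to be a reparameterization of Euler's transform already established in Example \ref{Pfaffex1}.

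On the left, one application of (\ref{shift}) followed by (\ref{2F1}) gives
\[
(1-x)^{a+b-c}\,\hypop_{c}^{a}\zav{x^n(1-x)^{-b}}=\frac{(a)_n}{(c)_n}\,x^{n}(1-x)^{a+b-c}\,\!\! \ _2 F_1\zav{\nadsebou{a+n\quad b}{c+n};x}.
\]
On the right, $\hypop_{c-b}^{a}x^{n}=\frac{(a)_n}{(c-b)_n}x^{n}$, and then (\ref{shift}) together with (\ref{2F1}) yields
\[
\hypop_{c}^{c-b}(1-x)^{-(c-a)}\hypop_{c-b}^{a}x^n=\frac{(a)_n}{(c)_n}\,x^{n}\,\!\! \ _2 F_1\zav{\nadsebou{c-b+n\quad c-a}{c+n};x}.
\]
After cancelling the common factor $(a)_n x^n/(c)_n$, the claim reduces to
\[
(1-x)^{a+b-c}\,\!\! \ _2 F_1\zav{\nadsebou{a+n\quad b}{c+n};x}=\!\! \ _2 F_1\zav{\nadsebou{c-b+n\quad c-a}{c+n};x},
\]
which is precisely Euler's transform (\ref{Euler}) applied with upper parameters $(a+n,b)$ and lower parameter $c+n$, combined with the trivial symmetry (\ref{Swap}) of $\!\! \ _2 F_1$ in its upper parameters.

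An alternative, more operator-theoretic route would mimic the derivation of Euler's transform from Pfaff's in Example \ref{Pfaffex1}: decompose $(1-x)^{-b}=(1-x)^{-c}(1-y)^{b-c}$ with $y=x/(x-1)$, apply the Pfaff property (\ref{Pfaffproperty}) to commute $\hypop_c^a$ through $(1-x)^{-c}$, perform the parameter exchange (\ref{exchange}) at the intermediate step, and apply Pfaff once more in reverse. I do not anticipate a genuine obstacle in either approach: the monomial reduction is essentially a matter of bookkeeping once (\ref{shift}) is brought to bear, and the operator route is structurally identical to the scalar derivation already worked out for $\!\! \ _2 F_1$.
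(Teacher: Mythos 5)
Your proposal is correct, but your primary argument is a genuinely different route from the paper's. The paper proves (\ref{Eulerproperty}) entirely at the operator level, mirroring the scalar derivation of (\ref{Euler}) in Example \ref{Pfaffex1}: it writes $\hypop_c^a(x)=(1-y)^a\hypop_c^a(y)(1-y)^{-c}$ with $y=x/(x-1)$ via (\ref{Pfaffproperty}), factors $\hypop_c^a(y)=\hypop_c^b(y)\hypop_b^a(y)$ in the middle, and applies the Pfaff property again in reverse to each factor --- i.e.\ exactly the ``alternative, more operator-theoretic route'' you sketch in your last paragraph. Your main argument instead reduces to monomials via (\ref{shift}) and invokes the already-established scalar Euler transform (\ref{Euler}); I checked the bookkeeping ($C-A-B=c-a-b$ with $A=a+n$, $B=b$, $C=c+n$, so the prefactor $(1-x)^{a+b-c}$ cancels exactly, and $C-A=c-a$, $C-B=c-b+n$ match after the swap (\ref{Swap})) and it is correct. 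There is no circularity, since (\ref{Euler}) is derived in Example \ref{Pfaffex1} from the Pfaff transform alone, before this proposition. What each approach buys: the paper's proof is self-contained at the operator level and exhibits (\ref{Eulerproperty}) as the direct ``operator lift'' of the double-Pfaff trick; your monomial computation instead makes explicit that the operator identity is equivalent to the one-parameter family of scalar Euler transforms with parameters $(a+n,\,b;\,c+n)$, $n\in\N_0$, which is arguably more illuminating about what information the operator statement actually carries, at the cost of leaning on the previously derived scalar identity.
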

\begin{proof}
\begin{align*}
\hypop_{c}^a(x)&\stackrel{(\ref{Pfaffproperty})}{=}(1-y)^{a}\hypop_{c}^{a}(y)(1-y)^{-c}, & y&:=\frac{x}{x-1},\\
&\stackrel{(\ref{exchange})}{=}(1-x)^{-a}\hypop_{c}^{b}(y)\hypop_{b}^{a}(y)(1-x)^{c}\\
&\stackrel{(\ref{Pfaffproperty})}{=}
(1-x)^{a+b}\hypop_{c}^{b}(x)(1-x)^{-c+a}\hypop_{b}^{a}(x)(1-x)^{c-b}.
\end{align*}
This is what we want just in different form.
\end{proof}
 
\begin{example}
Applying (\ref{Eulerproperty}) on the constant function $1$ we get
\begin{align*}
LHS&=(1-x)^{a+b-c}\hypop_{c}^a(1-x)^{-b}1=(1-x)^{a+b-c}\!\! \ _2 F_1\zav{\nadsebou{a\quad b}{c};x}.\\
RHS&=\hypop_{c}^{c-b}(1-x)^{-(c-a)}\hypop_{c-b}^{a}1=\hypop_{c}^{c-b}(1-x)^{-(c-a)}=\!\! \ _2 F_1\zav{\nadsebou{c-b\quad c-a }{c};x},
\end{align*}
which is exactly Euler transform (\ref{Euler}).
\end{example}
\begin{example}
We can also derive Euler-like transform for $\!\! \ _3 F_2$ function in the form
\begin{equation}\label{3F2Euler}
\!\! \ _3 F_2\zav{\nadsebou{a_1\quad a_2\quad a_3}{c_1\quad c_2};x}=(1-x)^{\sigma}\hypop_{c_1}^{\sigma+a_1}(1-x)^{-(c_1-a_1)}\!\!\ _3F_2\zav{\nadsebou{a_1\quad c_2-a_2\quad c_2-a_3}{\sigma+a_1\quad c_2};x},
\end{equation}
where the so-called \textit{parameter excess} $\sigma$ is $\sigma:=c_1+c_2-a_1-a_2-a_3$.
Proof is done by the following argument:
\begin{align*}
\!\! \ _3 F_2\zav{\nadsebou{a_1\quad a_2\quad a_3}{c_1\quad c_2};x}&=\hypop_{c_1}^{a_1}
\!\! \ _2 F_1\zav{\nadsebou{a_2\quad a_3}{ c_2};x}\\
&\serovna{Euler}\hypop_{c_1}^{a_1}(1-x)^{c_2-a_2-a_3}
\!\! \ _2 F_1\zav{\nadsebou{c_2-a_2\quad c_2-a_3}{ c_2};x}\\
&\serovna{Eulerproperty}(1-x)^{\sigma}\hypop_{c_1}^{\sigma+a_1}(1-x)^{-(c_1-a_1)}\hypop_{\sigma+a_1}^{a_1}\!\! \ _2 F_1\zav{\nadsebou{c_2-a_2\quad c_2-a_3}{c_2};x}\\
&=(1-x)^{\sigma}\hypop_{c_1}^{\sigma+a_1}(1-x)^{-(c_1-a_1)}\!\! \ _3 F_2\zav{\nadsebou{a_1\quad c_2-a_2\quad c_2-a_3}{\sigma+a_1\quad c_2};x}.
\end{align*}
\end{example}
An important corollary that will be useful later on is the following:
\begin{corollary} Let $\szav{c_j}_{j\in\mathbb{Z}}$, $\szav{a_j}_{j\in\mathbb{Z}}$ are given sequences of complex numbers. Then for any $n\in \mathbb{Z}$ it holds:
\begin{equation}\label{Eulergen}
\prod_{j=1}^n (1-x)^{c_j}\hypop_{a_{j-1}}^{a_j}(x)=(1-x)^{c_1-\tilde c_1}
\zav{\prod_{j=1}^n (1-x)^{\tilde c_j}\hypop_{\tilde a_{j-1}}^{\tilde a_j}(x)} (1-x)^{a_{n-1}-\tilde a_{n-1}}\hypop_{\tilde a_n}^{a_n}(x),
\end{equation}
and
\begin{equation}\label{Eulergenrev}
\prod_{j=1}^n (1-x)^{\tilde c_j}\hypop_{\tilde a_{j-1}}^{\tilde a_j}(x)=(1-x)^{\tilde c_1-c_1}
\zav{\prod_{j=1}^n (1-x)^{c_j}\hypop_{a_{j-1}}^{a_j}(x)}\hypop_{ a_n}^{\tilde a_n}(x)  (1-x)^{\tilde a_{n-1}- a_{n-1}},
\end{equation}
where
$$
\tilde a_j:=a_0+\sum_{k=1}^j c_k,\qquad \tilde c_j:=c_j+c_{j-1}-a_{j-1}+a_{j-2}.
$$
\end{corollary}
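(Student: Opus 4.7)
The plan is to establish (\ref{Eulergen}) by induction on $n\geq 1$ with the Euler property (\ref{Eulerproperty}) as the sole non-trivial ingredient, and then to deduce (\ref{Eulergenrev}) from (\ref{Eulergen}) by operator inversion. The base case $n=1$ is immediate: with $\tilde a_0=a_0$ and $\tilde a_1=a_0+c_1$, the powers $(1-x)^{c_1-\tilde c_1}$ and $(1-x)^{\tilde c_1}$ cancel (so any value of $\tilde c_1$ is admissible, which explains why $\tilde c_1$ is otherwise unconstrained) and the telescoping $\hypop_{a_0}^{a_0+c_1}\hypop_{a_0+c_1}^{a_1}=\hypop_{a_0}^{a_1}$, a direct consequence of (\ref{exchange}), reproduces the left-hand side.

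For the inductive step, I split $P_n:=\prod_{j=1}^n(1-x)^{c_j}\hypop_{a_{j-1}}^{a_j}$ as $P_{n-1}(1-x)^{c_n}\hypop_{a_{n-1}}^{a_n}$, apply the induction hypothesis to $P_{n-1}$, and cancel the common head $(1-x)^{c_1-\tilde c_1}\prod_{j=1}^{n-1}(1-x)^{\tilde c_j}\hypop_{\tilde a_{j-1}}^{\tilde a_j}$ on both sides. The problem then reduces to the single local identity
\begin{equation*}
(1-x)^{a_{n-2}-\tilde a_{n-2}}\hypop_{\tilde a_{n-1}}^{a_{n-1}}(1-x)^{c_n}\hypop_{a_{n-1}}^{a_n}=(1-x)^{\tilde c_n}\hypop_{\tilde a_{n-1}}^{\tilde a_n}(1-x)^{a_{n-1}-\tilde a_{n-1}}\hypop_{\tilde a_n}^{a_n}.
\end{equation*}
I apply the Euler property (\ref{Eulerproperty}) to the subword $\hypop_{\tilde a_{n-1}}^{a_{n-1}}(1-x)^{c_n}$ with parameters $c=\tilde a_{n-1}$, $a=a_{n-1}$, $b=-c_n$. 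The key observation is that the intermediate parameter $c-b=\tilde a_{n-1}+c_n$ equals $\tilde a_n$ by the very definition of $\tilde a_n$, so Euler produces exactly the operator $\hypop_{\tilde a_{n-1}}^{\tilde a_n}$ sandwiched by $(1-x)^{a_{n-1}-\tilde a_{n-1}}\hypop_{\tilde a_n}^{a_{n-1}}$; the latter telescopes with the outer $\hypop_{a_{n-1}}^{a_n}$ into $\hypop_{\tilde a_n}^{a_n}$. Collecting the $(1-x)$-exponents gives $a_{n-2}-\tilde a_{n-2}+\tilde a_{n-1}+c_n-a_{n-1}$ which, after substituting $\tilde a_{n-1}-\tilde a_{n-2}=c_{n-1}$, simplifies to $c_n+c_{n-1}-a_{n-1}+a_{n-2}=\tilde c_n$, exactly as required.

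The reversed identity (\ref{Eulergenrev}) is then a purely algebraic consequence of (\ref{Eulergen}): solving for $\prod_{j=1}^n(1-x)^{\tilde c_j}\hypop_{\tilde a_{j-1}}^{\tilde a_j}$ requires inverting the scalar prefix $(1-x)^{c_1-\tilde c_1}$ (giving $(1-x)^{\tilde c_1-c_1}$) and the suffix $(1-x)^{a_{n-1}-\tilde a_{n-1}}\hypop_{\tilde a_n}^{a_n}$, whose inverse is $\hypop_{a_n}^{\tilde a_n}(1-x)^{\tilde a_{n-1}-a_{n-1}}$ by (\ref{inverse}); the resulting rearrangement matches (\ref{Eulergenrev}) on the nose. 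The only real obstacle here is the index bookkeeping and the alignment of the $(1-x)$-exponents in the inductive step; the extension to $n\leq 0$ through the convention (\ref{negativeproduct}) is automatic since every factor in the product is invertible, so running the same induction backwards handles those cases.
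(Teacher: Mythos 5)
Your proof is correct, and its engine is the same as the paper's: a single local application of the Euler property (\ref{Eulerproperty}), keyed to the observation that $\tilde a_{n-1}+c_n=\tilde a_n$, followed by telescoping of adjacent hypergeometrizations and bookkeeping of $(1-x)$-exponents. Your base case, your local identity, and your exponent computation $a_{n-2}-\tilde a_{n-2}+\tilde a_{n-1}+c_n-a_{n-1}=\tilde c_n$ all check out, and your derivation of (\ref{Eulergenrev}) by inverting the prefix and suffix is exactly what the paper does. The organization differs for $n\ge 0$: the paper avoids induction there, instead splitting each factor as $\hypop_{a_{j-1}}^{a_j}=\hypop_{a_{j-1}}^{\tilde a_j}\hypop_{\tilde a_j}^{a_j}$ via (\ref{exchange}), regrouping the whole product with the rearrangement $\prod_{j=1}^n A_jB_j=A_1\bigl(\prod_{j=2}^n B_{j-1}A_j\bigr)B_n$, and applying Euler to every regrouped block simultaneously; your induction localizes the same computation to the last factor, which is arguably easier to verify but proves nothing more.

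The one soft spot is your last sentence: dismissing $n\le 0$ as ``automatic since every factor in the product is invertible'' is too quick. Invertibility only gives you the recursion $P_{n-1}=P_nA_n^{-1}$ consistently across all integers $n$ under the convention (\ref{negativeproduct}); closing the downward inductive step still requires a fresh application of the Euler property, now instantiated with $b=c_n$ rather than $b=-c_n$, yielding $(1-x)^{a_{n-1}-\tilde a_{n-1}}\hypop_{\tilde a_n}^{a_{n-1}}(1-x)^{-c_n}=\hypop_{\tilde a_n}^{\tilde a_{n-1}}(1-x)^{a_{n-1}-\tilde a_{n-1}-c_n}\hypop_{\tilde a_{n-1}}^{a_{n-1}}$, together with the extended definition (\ref{negativesum}) of $\tilde a_{-j}$. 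This does go through --- the paper carries out precisely this downward induction as its Case 2 --- but it is a genuine computation mirroring the upward step, not a formal consequence of invertibility, and it should be written out.
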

\begin{remark}
We claim  that equations (\ref{Eulergen}), (\ref{Eulergenrev}) are valid even for negative $n$. In that case, concerned products must be interpreted as in (\ref{negativeproduct}) and, in the same way, we define
\begin{equation}\label{negativesum}
\tilde a_0:=a_0,\qquad \tilde a_{-j}:=a_0-\sum_{k=1}^{j}c_{1-k},\qquad j\in\mathbb{N}.
\end{equation}
\end{remark}
\begin{proof}
We are going to prove (\ref{Eulergen}) only.  The second identity (\ref{Eulergenrev}) is just its inverse.
There are two cases to consider. 

\textit{Case 1.} Suppose $n\geq 0$.
Using the obvious identity
\begin{equation}\label{preusporadat}
\prod_{j=1}^n A_j B_j=A_1\zav{\prod_{j=2}^n B_{j-1}A_j}B_n,
\end{equation}
which holds for any sequences of linear operators $A_j,B_j$ (and in fact for any integer $n$) we can see that
\begin{align*}
\prod_{j=1}^n (1-x)^{c_j}\hypop_{a_{j-1}}^{a_j}(x)&\stackrel{(\ref{exchange})}{=}\prod_{j=1}^n (1-x)^{c_j}\hypop_{a_{j-1}}^{\tilde a_j}(x)\hypop_{\tilde a_j}^{a_j}(x)\stackrel{(\ref{preusporadat})}{=}(1-x)^{c_1}\hypop_{a_0}^{\tilde a_1}(x)\zav{\prod_{j=2}^n \hypop_{\tilde a_{j-1}}^{a_{j-1}}(x)(1-x)^{c_j}\hypop_{a_{j-1}}^{\tilde a_j}(x) }\hypop_{\tilde a_n}^{a_n}(x).\\
\intertext{Note that $\tilde a_j-\tilde a_{j-1}=c_j$. Therefore we can use Euler property to obtain:}
\prod_{j=1}^n (1-x)^{c_j}\hypop_{a_{j-1}}^{a_j}(x)&\stackrel{(\ref{Eulerproperty})}{=}
(1-x)^{c_1}\hypop_{a_0}^{\tilde a_1}(x)\zav{\prod_{j=2}^n (1-x)^{\tilde a_j-a_{j-1}}\hypop_{\tilde a_{j-1}}^{\tilde a_{j}}(x)(1-x)^{a_{j-1}-\tilde a_{j-1}} }\hypop_{\tilde a_n}^{a_n}(x)\\
&\stackrel{(\ref{preusporadat})}{=}
(1-x)^{c_1}\hypop_{a_0}^{\tilde a_1}(x)(1-x)^{\tilde a_2-a_1}\hypop_{\tilde a_1}^{\tilde a_2}(x)\zav{\prod_{j=3}^n (1-x)^{\tilde c_j}\hypop_{\tilde a_{j-1}}^{\tilde a_{j}}(x) }(1-x)^{a_{n-1}-\tilde a_{n-1}}\hypop_{\tilde a_n}^{a_n}(x),\\
\intertext{here we have used the fact that $\tilde c_j=\tilde a_j-a_{j-1}+a_{j-2}-\tilde a_{j-2}$ since $\tilde a_{j}-\tilde a_{j-2}=c_j+c_{j-1}$. Observe also that $\tilde c_2=\tilde a_2-a_1$ and $\tilde a_0=a_0$. Thus}
&=(1-x)^{c_1-\tilde c_1}\zav{\prod_{j=1}^n (1-x)^{\tilde c_j}\hypop_{\tilde a_{j-1}}^{\tilde a_{j}}(x) }(1-x)^{a_{n-1}-\tilde a_{n-1}}\hypop_{\tilde a_n}^{a_n}(x).
\end{align*}
This proves (\ref{Eulergen}) for $n\geq 0$. 

\textit{Case 2.} The case $n<0$ we will prove by induction. Renaming $n=-n$ and using the definition for ``negative'' product (\ref{negativeproduct}) we have to show that
$$
\prod_{j=1}^n \hypop_{a_{1-j}}^{a_{-j}}(x) (1-x)^{-c_{1-j}}=(1-x)^{c_1-\tilde c_1}
\zav{\prod_{j=1}^n \hypop_{\tilde a_{1-j}}^{\tilde a_{-j}}(x)(1-x)^{-\tilde c_{1-j}}} (1-x)^{a_{-n-1}-\tilde a_{-n-1}}\hypop_{\tilde a_{-n}}^{a_{-n}}(x),
$$
for all $n=0,1,2,\dots$. The base case $n=0$ is trivial.

For the induction steps  
\begin{align*}
&\prod_{j=1}^{n+1} \hypop_{a_{1-j}}^{a_{-j}}(x) (1-x)^{-c_{1-j}}=
\zav{\prod_{j=1}^{n} \hypop_{a_{1-j}}^{a_{-j}}(x) (1-x)^{-c_{1-j}}}\hypop_{a_{-n}}^{a_{-n-1}}(x) (1-x)^{-c_{-n}}\\
&=(1-x)^{c_1-\tilde c_1}
\zav{\prod_{j=1}^n \hypop_{\tilde a_{1-j}}^{\tilde a_{-j}}(x)(1-x)^{-\tilde c_{1-j}}} (1-x)^{a_{-n-1}-\tilde a_{-n-1}}\hypop_{\tilde a_{-n}}^{a_{-n}}(x)
\hypop_{a_{-n}}^{a_{-n-1}}(x) (1-x)^{-c_{-n}}\\
&\stackrel{(\ref{exchange})}{=}
(1-x)^{c_1-\tilde c_1}
\zav{\prod_{j=1}^n \hypop_{\tilde a_{1-j}}^{\tilde a_{-j}}(x)(1-x)^{-\tilde c_{1-j}}} (1-x)^{a_{-n-1}-\tilde a_{-n-1}}
\hypop_{\tilde a_{-n}}^{a_{-n-1}}(x) (1-x)^{-c_{-n}}\\
&\stackrel{(\ref{Eulerproperty})}{=}
(1-x)^{c_1-\tilde c_1}
\zav{\prod_{j=1}^n \hypop_{\tilde a_{1-j}}^{\tilde a_{-j}}(x)(1-x)^{-\tilde c_{1-j}}} 
\hypop_{\tilde a_{-n}}^{\tilde a_{-n-1}}(x)(1-x)^{a_{-n-1}-\tilde a_{-n}} \hypop_{\tilde a_{-n-1}}^{a_{-n-1}}(x)\\
&=
(1-x)^{c_1-\tilde c_1}
\zav{\prod_{j=1}^{n+1} \hypop_{\tilde a_{1-j}}^{\tilde a_{-j}}(x)(1-x)^{-\tilde c_{1-j}}} 
(1-x)^{-\tilde c_{-n}+a_{-n-1}-\tilde a_{-n}} \hypop_{\tilde a_{-n-1}}^{a_{-n-1}}(x)\\
&=
(1-x)^{c_1-\tilde c_1}
\zav{\prod_{j=1}^{n+1} \hypop_{\tilde a_{1-j}}^{\tilde a_{-j}}(x)(1-x)^{-\tilde c_{1-j}}} 
(1-x)^{a_{-n-2}-\tilde a_{-n-2}} \hypop_{\tilde a_{-n-1}}^{a_{-n-1}}(x),
\end{align*}
where the last equality stems form the definition of $\tilde c_{-n}$ and $\tilde a_{-n-2}$. Which is what we want. 
Thus we have proven (\ref{Eulergen}) for all integer $n$.

\end{proof}
\begin{example}
For $c_j=a_j-a_{j-1}$ it holds
$$
\tilde c_j=c_j,\qquad \tilde a_j=a_j,
$$
and equality (\ref{Eulergen}) is a simple identity.
\end{example}
\begin{example}
If $c_j=a_j-a_{j-1}+\alpha$ for some fixed $\alpha\in \mathbb{C}$ we have
$$
\tilde c_j=c_j+\alpha,\qquad \tilde a_j=a_j+\alpha j.
$$
Notice that 
$\tilde c_j=\tilde a_j-\tilde a_{j-1}+\alpha$. We can therefore repeat the process. If we do it $m$ times we obtain the following identity:
\begin{equation}\label{Eulergenm}
\prod_{j=1}^{n}(1-x)^{c_j}\hypop_{a_{j-1}}^{a_j}=(1-x)^{-\alpha m}
\zav{\prod_{j=1}^{n}(1-x)^{c_j+m\alpha}\hypop_{a_{j-1}+m\alpha(j-1)}^{a_j+m\alpha j}}\zav{\prod_{k=1}^m (1-x)^{-\alpha(n-1)}\hypop_{a_n+(m+1-k)\alpha n}^{a_n+(m-k)\alpha n}}.
\end{equation}
Now, if we solve for the first product on the right by multiplying by  the inverse of the second product from the right and by the factor $(1-x)^{\alpha m}$ from the left and then rename the sequences $c_j\to c_j-m\alpha$ and $a_j\to a_j-m\alpha j$, we obtain an inverse expression which reads:
\begin{equation}\label{Eulergen-m}
\prod_{j=1}^{n}(1-x)^{c_j}\hypop_{a_{j-1}}^{a_j}=(1-x)^{\alpha m}
\zav{\prod_{j=1}^{n}(1-x)^{c_j-m\alpha}\hypop_{a_{j-1}-m\alpha(j-1)}^{a_j-m\alpha j}}\zav{\prod_{k=1}^{m} \hypop_{a_n+(k-1-m)\alpha n}^{a_n+(k-m)\alpha n}(1-x)^{\alpha(n-1)}}.
\end{equation}
But observe this is exactly the same formula which we would get if we put $m=-m$ into (\ref{Eulergenm}) and interpret the product as usual (see (\ref{negativeproduct})). 

Therefore the formula (\ref{Eulergenm}) is in fact true for all integers $m\in\mathbb{Z}$.
\end{example}

%\begin{example}
%Suppose $c_j=0$ for all $j$. Then $\tilde a_j=a_0$ and $\tilde c_j=a_{j-2}-a_{j-1}$. Thus we get
%\begin{align*}
%\hypop_{a_0}^{a_n}(x)&\stackrel{(\ref{exchange})}{=}
%\prod_{j=1}^n\hypop_{a_{j-1}}^{a_{j}}(x)\stackrel{(\ref{Eulergen})}{=}
%(1-x)^{a_0-a_{-1}}\zav{\prod_{j=1}^{n}(1-x)^{a_{j-2}-a_{j-1}}\hypop_{a_0}^{a_0}(x)}(1-x)^{a_{n-1}-a_0}\hypop_{a_0}^{a_n}(x)\\
%&=(1-x)^{a_0-a_{-1}}(1-x)^{-a_{n-1}+a_{-1}}(1-x)^{a_{n-1}-a_0}\hypop_{a_0}^{a_n}(x)=\hypop_{a_0}^{a_n}(x).
%\end{align*}
%A true statement.
%\end{example}

%\begin{example} On the other hand for $a_j= a$ $\forall j$ we have 
%$\tilde c_j=c_j+c_{j-1}$, $\tilde a_j=a+\sum_{k=1}^j c_k$ and thus we arrive at the following identity
%$$
%
%$$
%
%\end{example}
\section{Change of coordinates}\label{S10}
The Pfaff property (\ref{Pfaffproperty}) along with scaling of the argument (\ref{argscaling}) and argument's power law (\ref{nthpower}), i.e. the following list:
\begin{align*}
&(\ref{argscaling})\qquad\hypop_c^a (x)=\hypop_c^a (y), &  y&=\alpha x.\\
&(\ref{secondpower})\qquad\hypop_c^a (x)=\hypop_{\frac{c}{2}}^{\frac{a}{2}}(y)\hypop_{\frac{c+1}{2}}^{\frac{a+1}{2}}(y), &  y&=x^2.\\
&(\ref{nthpower})\qquad\hypop_c^a (x)=\hypop_{\frac{c}{n}}^{\frac{a}{n}}(y)\hypop_{\frac{c+1}{n}}^{\frac{a+1}{n}}(y)\cdots \hypop_{\frac{c+n-1}{n}}^{\frac{a+n-1}{n}}(y) , &  y&=x^n.\\
&(\ref{Pfaffproperty})\qquad\hypop^a_c\zav{x}=(1-y)^{a}\hypop^a_c\zav{y}(1-y)^{-c}, & y&=\frac{x}{x-1}.
\end{align*}
can be viewed as an instances of \textit{change of variable} $x\to y$. Are there any more? Obviously, we can produce additional identities just by \textit{combining} (\ref{Pfaffproperty}), (\ref{argscaling}) and (\ref{nthpower}), for example:
\begin{align}
&(1-x)^{\frac{a}{2}}\hypop^a_c\zav{x}(1-x)^{-\frac{c}{2}}=\hypop^{\frac{a}{2}}_{\frac{c+1}{2}}\zav{y}(1-y)^{-\frac{c-a}{2}}\hypop_{\frac{c}{2}}^{\frac{a+1}{2}}(y), & y&=\frac{x^2}{4(x-1)}.\label{Qt5}\\
&\zav{1-\frac{x}{2}}^{a}\hypop^a_c\zav{x}\zav{1-\frac{x}{2}}^{-c}=\hypop^{\frac{a}{2}}_{\frac{c+1}{2}}\zav{y}\hypop_{\frac{c}{2}}^{\frac{a+1}{2}}(y), & y&=\frac{x^2}{(2-x)^2}.\label{Qt6}\\
&(1-x^2)^{\frac{a+1}{2}}\hypop_{c}^a(x) (1-x^2)^{-\frac{c+1}{2}}=\hypop_{\frac{c}{2}}^{\frac{a+1}{2}}(y)(1-y)^{-\frac{c-a}{2}}\hypop_{\frac{c+1}{2}}^{\frac{a}{2}}(y), & y&=\frac{x^2}{x^2-1}.\label{Qt7}
\end{align}
For the proof, define the following functions:
\begin{align*}
S_\alpha(x)&=\alpha x,& \text{Scaling.}\\
M_\alpha(x)&=x^\alpha, & \text{Power.}\\
P(x)&=\frac{x}{x-1}, & \text{Pfaff.}
\end{align*}
Their properties are:
\begin{align*}
S_\alpha\circ S_\beta&=S_{\alpha\beta}, & S_1&=Id,\\
M_\alpha\circ M_\beta&=M_{\alpha\beta}, & M_1&=Id,\\
P\circ P&=Id.
\end{align*}
We have
\begin{align*}
\frac{x^2}{x^2-1}&=P\circ M_2(x), & \frac{x^2}{4(x-1)}&=P\circ M_2\circ P\circ S_{\frac12}(x),\\
\zav{\frac{x}{2-x}}^2&=M_2\circ P\circ S_\frac12(x).
\end{align*}
Thus the identities (\ref{Qt5}), (\ref{Qt6}), (\ref{Qt7}) are direct consequences of already established properties (\ref{Pfaffproperty}), (\ref{argscaling}), (\ref{nthpower}).
\begin{example}
Applying the identity (\ref{Qt5}) on the constant function 1 we get:
\begin{equation}\label{2F1Q2}
\zav{1-x}^{\frac{a}{2}}\!\! \ _2 F_1\zav{\nadsebou{a\quad \frac{c}{2}}{c};x}=\!\! \ _2 F_1\zav{\nadsebou{\frac{a}{2}\quad \frac{c-a}{2}}{\frac{c+1}{2}};\frac{x^2}{4(x-1)}},
\end{equation}
a quadratic transform for $\!\! \ _2 F_1$ (the identity 15.8.14 in \cite{dlmf15.8}).
\end{example}
Evidently, \textit{any} composition chain of $P,\ S_\alpha\ M_\alpha$ functions will lead to a valid change of coordinates. For instance:
\begin{align*}
\frac{x^2}{ax+b}&=S_{-\frac{4b}{a^2}}\circ P\circ M_2\circ P\circ S_{-\frac{a}{2b}}(x).
\end{align*}
A function that cannot be obtain by any finite combination of $P,\ S_\alpha,\ M_\alpha$ is
$$
Q(x):=\frac{-4x}{(1-x)^2},
$$
but the corresponding change of variable is the following:
\begin{proposition}\label{Qprop}
Let $\beta:=\frac{a+c-1}{2}$. Then it holds:
\begin{equation}\label{Qt1monom}
(1-x)^{2\beta}\hypop^a_c\zav{x}(1-x)^{-2\beta}=\hypop^{\beta}_c\zav{y}(1-y)^{-\frac{c-a}{2}}\hypop_{\beta}^{a}(y),\qquad  y:=\frac{-4x}{(1-x)^2}.
\end{equation}
\end{proposition}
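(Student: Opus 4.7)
The plan is to observe that although $Q$ is not a composition of $P$, $S_\alpha$, and $M_\alpha$, it does factor as $Q = F_2 \circ S_2 \circ P$, where $F_2(z) := 1-(1-z)^2$. Indeed,
\[
F_2(S_2(P(x))) = 1 - \zav{1 - \frac{2x}{x-1}}^{\!2} = 1 - \zav{\frac{1+x}{1-x}}^{\!2} = \frac{-4x}{(1-x)^2}.
\]
So the identity should follow from the Pfaff property (\ref{Pfaffproperty}) for $P$, argument scaling (\ref{argscaling}) for $S_2$, and a new change-of-variable formula for $F_2$.

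First I would strip off the Pfaff layer. Using $\hypop_c^a(x) = (1-x)^{-a}\hypop_c^a(P(x))(1-x)^c$ inside the LHS of Proposition~\ref{Qprop}, the $(1-x)$-powers collapse (since $2\beta = a+c-1$), and setting $z := P(x)$ (so $1-x = (1-z)^{-1}$) the identity rewrites as
\[
(1-z)^{1-c}\hypop_c^a(z)(1-z)^{a-1} = \hypop_c^\beta(y)(1-y)^{-(c-a)/2}\hypop_\beta^a(y),\qquad y = 4z(1-z) = F_2(2z),
\]
which is precisely (\ref{Qt3}). Absorbing the scaling $w = 2z$ via (\ref{argscaling}) then reduces the task to a change of variable for the pure map $y = F_2(w)$, i.e.\ the $m = 2$ case of Theorem~\ref{main}.

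To establish this $F_2$-level identity I would verify it on the monomial basis $\{y^n\}_{n\geq 0}$. For each $n$, the shift property (\ref{shift}) extracts $y^n$ (equivalently $(-4)^n x^n(1-x)^{-2n}$ after composition with $Q$) past each hypergeometrization, producing a common factor $(a)_n/(c)_n$ on both sides and shifting the parameter pair $(a,c) \to (a+n,\,c+n)$ (so $\beta \to \beta+n$). The statement for $y^n$ therefore reduces to the $n=0$ statement with shifted parameters, and the entire operator identity collapses to a single scalar equation
\[
(1-x)^{2\beta}\!\! \ _2 F_1\zav{\nadsebou{a\quad 2\beta}{c};x} = \!\! \ _2 F_1\zav{\nadsebou{\beta\quad (c-a)/2}{c};Q(x)}.
\]

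The main obstacle is this scalar identity. Applying an Euler transform on the LHS and substituting $z = P(x)$ (so that $Q(x) = 4z(1-z)$) shows it is equivalent to the classical quadratic transformation $\!\! \ _2 F_1(2A,2B;A+B+\tfrac12;z) = \!\! \ _2 F_1(A,B;A+B+\tfrac12;4z(1-z))$, which is a genuinely new input beyond (\ref{Pfaffproperty}), (\ref{argscaling}), (\ref{nthpower}) --- consistent with the paper's remark that $Q$ escapes the composition closure of $\{P,S_\alpha,M_\alpha\}$. One may prove it either by matching Taylor coefficients via the Pochhammer duplication identity $(a)_{2k} = 4^k (a/2)_k\, ((a+1)/2)_k$, or by verifying that both sides satisfy the same hypergeometric ODE in $z$ with the same value and first derivative at the origin.
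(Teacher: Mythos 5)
Your proposal is correct, and its core is the same as the paper's: reduce to the monomial basis $\{y^n\}$ via the shift property (\ref{shift}), which collapses the operator identity to a single scalar quadratic transformation of $\!\!\ _2F_1$ that must be imported as an external classical input. The differences are organizational. The paper proves (\ref{Qt1monom}) directly on monomials, invoking the transform in the form $\!\!\ _2F_1\zav{\nadsebou{a\quad b}{a-b+1};x}=(1-x)^{-a}\!\!\ _2F_1\zav{\nadsebou{\frac a2\quad \frac a2-b+\frac12}{a-b+1};\frac{-4x}{(1-x)^2}}$ (DLMF 15.8.16), and only afterwards derives (\ref{Qt3}) from (\ref{Qt1monom}) via the composition $4x(1-x)=Q\circ P(x)$; you run that last step backwards, conjugating by the Pfaff property first so as to land on (\ref{Qt3}) (the $m=2$ case of Theorem \ref{main}) and on the $4z(1-z)$ form of the quadratic transform. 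Since the two forms of the classical transform are themselves Pfaff-equivalent, the two proofs are mathematically interchangeable; yours has the small advantage of making explicit that the only genuinely new ingredient beyond $\szav{P,S_\alpha,M_\alpha}$ is the single scalar identity $\!\!\ _2F_1\zav{\nadsebou{2A\quad 2B}{A+B+\frac12};z}=\!\!\ _2F_1\zav{\nadsebou{A\quad B}{A+B+\frac12};4z(1-z)}$. Two small caveats: your parenthetical ``Euler transform'' should really be a Pfaff transform (it is the substitution $z=P(x)$ itself that converts one form into the other), and the claim that the Taylor-coefficient proof of the quadratic transform follows from the duplication identity $(a)_{2k}=4^k(a/2)_k((a+1)/2)_k$ alone is an oversimplification --- collecting powers of $z$ in $(4z(1-z))^k$ also requires a terminating $\!\!\ _3F_2$ evaluation of Saalsch\"utz--Vandermonde type. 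Your alternative ODE verification is complete as stated, so neither point is a gap.
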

\begin{proof} As always, it is sufficient to prove the formula (\ref{Qt1monom}) only on powers of $y$.
% Extending this result to any functions of the form $y^\alpha f(y)$ where $f$ is holomorphic on some suitable neighborhood of 0 can be done by a similar argument as in (\ref{Pfaffproperty}).
  The proof is based on a ``quadratic transform'' of $\!\! \ _2 F_1$ function valid for $\abs{x}<1$:
\begin{equation}\label{2F1Q1}
\!\! \ _2 F_1\zav{\nadsebou{a\quad b}{a-b+1};x}=(1-x)^{-a}\!\! \ _2 F_1\zav{\nadsebou{\frac{a}{2}\quad \frac{a}{2}-b+\frac12}{a-b+1};\frac{-4x}{(1-x)^2}},\qquad \abs{x}<1. 
\end{equation}
See \cite[15.8.16]{dlmf15.8}.
Let $1+\alpha\in\mathbb{N}$. Then we have
\begin{align*}
LHS&=(1-x)^{2\beta}\hypop^a_c\zav{x}(1-x)^{-2\beta}y^\alpha=(1-x)^{2\beta}\hypop^a_c\zav{x}(1-x)^{-2\beta}(-4x)^\alpha (1-x)^{-2\alpha}\\
&\serovna{shift}(1-x)^{2\beta}(-4x)^\alpha\frac{(a)_\alpha}{(c)_\alpha}\hypop^{a+\alpha}_{c+\alpha}\zav{x}(1-x)^{-2(\beta+\alpha)}\\
&\serovna{2F1}(1-x)^{2\beta}(-4x)^\alpha\frac{(a)_\alpha}{(c)_\alpha}\!\! \ _2 F_1\zav{\nadsebou{a+\alpha\quad 2(\beta+\alpha)}{c+\alpha};x}\\
&\serovna{2F1Q1}(1-x)^{2\beta}(-4x)^\alpha\frac{(a)_\alpha}{(c)_\alpha}(1-x)^{-2\beta-2\alpha}\!\! \ _2 F_1\zav{\nadsebou{\beta+\alpha\quad \beta-a+\frac12}{c+\alpha};y}\\
&\serovna{2F1}y^\alpha\frac{(a)_\alpha}{(c)_\alpha}\hypop_{c+\alpha}^{\beta+\alpha}(y)(1-y)^{a-\beta-\frac12}\serovna{shift}\hypop_{c}^{\beta}(y) \frac{(a)_\alpha}{(\beta)_\alpha}y^\alpha(1-y)^{-\frac{c-a}{2}}=\hypop_{c}^{\beta}(y)(1-y)^{-\frac{c-a}{2}}\hypop_{\beta}^a y^\alpha\\
&=RHS.
\end{align*}
\end{proof}
\begin{example} 
Using (\ref{Qt1monom}) on a constant function $1$ we obtain
$$
(1-x)^{c+a-1}\!\! \ _2 F_1\zav{\nadsebou{c+a-1\quad a}{c};x}=\!\! \ _2F_1\zav{\nadsebou{\frac{c-a}{2}\quad \frac{a+c-1}{2}}{c};-\frac{4x}{(1-x)^2}},
$$
thus we recovered (\ref{2F1Q1}). (See \cite[15.8.6.]{dlmf15.8})

Now, shift the parameters by $a\to a+b $, $c\to c-b$ so we have
$$
(1-x)^{c+a-1}\!\! \ _2 F_1\zav{\nadsebou{c+a-1\quad a+b}{c-b};x}=\!\! \ _2F_1\zav{\nadsebou{\frac{c-a}{2}-b\quad \frac{a+c-1}{2}}{c-b};y},
$$
and apply transform again to get
\begin{align*}
(1-x)^{c+a-1}\!\! \ _3 F_2\zav{\nadsebou{c+a-1\quad a+b\quad a}{c-b\quad c};x}&=
\hypop_{\frac{a+c-1}{2}}^{c}(y)(1-y)^{-\frac{c-a}{2}} \!\! \ _2F_1\zav{\nadsebou{\frac{c-a}{2}-b\quad a}{c-b};y}\\
&=\hypop_{\frac{a+c-1}{2}}^{c}(y)\!\! \ _2F_1\zav{\nadsebou{\frac{c+a}{2}\quad c-a-b}{c-b};y}\\
&=\!\! \ _3F_2\zav{\nadsebou{\frac{c-a}{2}-b\quad \frac{c+a}{2}\quad \frac{a+c-1}{2}}{c-b\quad c};-\frac{4x}{(1-x)^2}},
\end{align*}
a quadratic formula for $\!\! \ _3 F_2$! (See \cite[16.6.1.]{dlmf16.6}.)
\end{example}
\begin{example}
Consider the function 
$$
g(x):=(1-yt)^{-\beta},\qquad y:=\frac{-4x}{(1-x)^2},\qquad \beta:=\frac{a+c-1}{2}.
$$
Note that 
$$
1-yt=\frac{1-2x(1-2t)+x^2}{(1-x)^2}=\frac{(1-\tau_+ x)(1-\tau_- x)}{(1-x)^2},
$$
where $\tau_\pm$ are complex numbers such that $\tau_++\tau_-=2-4t$, $\tau_+\tau_-=1$, i.e.
$$
\tau_{\pm}:=2\zav{(2t-1)^2\pm\sqrt{t(t-1)}}.
$$
 Thus
$$
g(x)=(1-\tau_+ x)^{-\beta}(1-\tau_- x)^{-\beta}(1-x)^{2\beta}.
$$
Applying (\ref{Qt1monom}) on the function $g$ we obtain: 
\begin{align*}
RHS&=\hypop^{\beta}_c\zav{y}(1-y)^{-\frac{c-a}{2}}\hypop_{\beta}^{a}(y)(1-yt)^{-\beta}\\
&\serovna{2F1}\hypop^{\beta}_c\zav{y}(1-y)^{-\frac{c-a}{2}}(1-yt)^{-a}\serovna{F1}
F_1\zav{\nadsebou{\beta}{c};\nadsebou{\frac{c-a}{2}\quad a }{-};y,yt}.\\
LHS&=(1-x)^{2\beta}\hypop^a_c\zav{x}(1-x)^{-2\beta}g(x)=(1-x)^{2\beta}\hypop^a_c\zav{x}(1-\tau_+ x)^{-\beta}(1-\tau_- x)^{-\beta}\\
&\serovna{F1}(1-x)^{2\beta}F_1\zav{\nadsebou{a}{c};\nadsebou{\beta\quad \beta}{-};\tau_+ x,\tau_- x}.
\end{align*}
Altogether we discover a \textit{quadratic} transform for $F_1$:
\begin{equation}\label{F1Q}
F_1\zav{\nadsebou{a}{c};\nadsebou{\frac{a+c-1}{2}\quad\frac{a+c-1}{2} }{-};\tau_+ x,\tau_- x}=(1-x)^{1-a-c}F_1\zav{\nadsebou{\frac{a+c-1}{2}}{c};\nadsebou{\frac{c-a}{2}\quad a}{-};\frac{-4x}{(1+x)^2},\frac{-4x t}{(1+x)^2}},
\end{equation}
where
$$
\tau_{\pm}:=2\zav{(2t-1)^2\pm\sqrt{t(t-1)}}.
$$

For more quadratic transforms of Appell's function see \cite{Carlson2}.
\end{example}
We can of course consider also combinations of $Q$ with other functions:
\begin{proposition}\label{ChangeofvarP} Let $\beta:=\frac{a+c-1}{2}$. For generic values of $a,c\in\C$ it holds:
\begin{align}
%&(1-x)^{2\beta}\hypop^a_c\zav{x}(1-x)^{-2\beta}=\hypop^{\beta}_c\zav{y}(1-y)^{-\frac{c-a}{2}}\hypop_{\beta}^{a}(y), & y&=\frac{-4x}{(1-x)^2}.\label{Qt1}\\
&(1+x)^{2\beta}\hypop^a_c\zav{x}(1+x)^{-2\beta}=\hypop^{\beta}_c\zav{y}(1-y)^{-\frac{c-a}{2}}\hypop_{\beta}^{a}(y), & y&=\frac{4x}{(1+x)^2}.\label{Qt2}\\
&(1-x)^{1-c}\hypop^a_c\zav{x}(1-x)^{a-1}=\hypop^{\beta}_c\zav{y}(1-y)^{-\frac{c-a}{2}}\hypop_{\beta}^{a}(y), & y&=4x(1-x).\label{Qt3}\\
&(1-x)^{1-c}\hypop^a_c\zav{x}(1-x)^{a-1}=(1-y)^{\beta}\hypop^{\beta}_c\zav{y}(1-y)^{-\frac{c-a}{2}}\hypop_{\beta}^{a}(y)(1-y)^{-\beta}, & y&=\frac{4x(x-1)}{(1-2x)^2}.\label{Qt4}
\end{align}
\end{proposition}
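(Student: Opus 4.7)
The plan is to realize each of the three new variable changes as a composition involving the ``$Q$'' transform (\ref{Qt1monom}), the Pfaff property (\ref{Pfaffproperty}), and scaling (\ref{argscaling}). Writing $Q(x):=-4x/(1-x)^2$, a direct computation gives the decompositions
\begin{align*}
\frac{4x}{(1+x)^2}&=Q\circ S_{-1}(x),& 4x(1-x)&=Q\circ P(x),& \frac{4x(x-1)}{(1-2x)^2}&=P\circ Q\circ P(x),
\end{align*}
so each identity should follow by chaining (\ref{Qt1monom}) with appropriate Pfaff/scaling moves and reorganizing the scalar factors.

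For (\ref{Qt2}) the proof reduces to substituting $x\to -x$ in (\ref{Qt1monom}). By (\ref{argscaling}) the operator $\hypop^a_c$ is unchanged under this substitution, while $(1-x)^{\pm 2\beta}\to(1+x)^{\pm 2\beta}$ and the right-hand argument becomes $4x/(1+x)^2$; this is exactly (\ref{Qt2}).

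For (\ref{Qt3}) I first invoke Pfaff in the solved form $\hypop^a_c(u)=(1-x)^a\hypop^a_c(x)(1-x)^{-c}$ with $u=P(x)=x/(x-1)$, and apply (\ref{Qt1monom}) in the variable $u$ with target $y=Q(u)=4x(1-x)$. Substituting and using the elementary relation $(1-u)^s=(1-x)^{-s}$ together with $2\beta=a+c-1$ gives $a-2\beta=1-c$ and $2\beta-c=a-1$, which turns the left-hand side into exactly $(1-x)^{1-c}\hypop^a_c(x)(1-x)^{a-1}$, establishing (\ref{Qt3}).

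Finally, (\ref{Qt4}) is obtained from (\ref{Qt3}) by applying Pfaff a second time, this time separately to each factor $\hypop^\beta_c(v)$ and $\hypop^a_\beta(v)$ on the right-hand side of (\ref{Qt3}), with $v=4x(1-x)$ and $y=P(v)$. The only delicate step is exponent bookkeeping in the middle: one must verify $c-\tfrac{c-a}{2}-a=\tfrac{c-a}{2}$ so that the three internal factors $(1-v)^c$, $(1-v)^{-(c-a)/2}$, $(1-v)^{-a}$ collapse to $(1-v)^{(c-a)/2}$. Converting all remaining $(1-v)$-factors via $1-y=(1-v)^{-1}$ then produces the symmetric form $(1-y)^\beta\,\hypop^\beta_c(y)(1-y)^{-(c-a)/2}\hypop^a_\beta(y)(1-y)^{-\beta}$ as required. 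This scalar-arithmetic step is the only (essentially routine) obstacle, since the operator content is handled entirely by the already established properties.
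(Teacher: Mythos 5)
Your proof is correct and takes essentially the same route as the paper, which derives all three identities from the decompositions $\frac{4x}{(1+x)^2}=Q\circ S_{-1}(x)$, $4x(1-x)=Q\circ P(x)$ and $\frac{4x(x-1)}{(1-2x)^2}=P\circ Q\circ P(x)$ combined with (\ref{Qt1monom}), (\ref{Pfaffproperty}) and (\ref{argscaling}). You merely make explicit the exponent bookkeeping (e.g.\ $(1-u)^s=(1-x)^{-s}$ and $a-2\beta=1-c$, $2\beta-c=a-1$) that the paper leaves to the reader.
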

\begin{proof}
These identities can be obtained, considering the following compositions: 
\begin{align*}
\frac{4x}{(1+x)^2}&=P\circ Q(x)=Q\circ S_{-1}(x), & 4x(1-x)&=Q\circ P(x)\\
\frac{4x(x-1)}{(1-2x)^2}&=P\circ Q\circ P(x). 
\end{align*}
\end{proof}
\begin{example}
Consider the following elementary identity:
\begin{equation}\label{elidex}
(1-x)^{-3\alpha}\zav{1-\zav{\frac{x}{x-1}}^3}^{-\alpha}=\zav{1-3x(1-x)}^{-\alpha},\qquad \alpha\in\mathbb{C}.
\end{equation}
Applying the operator 
$$
(1-x)^{a-3\alpha}\hypop_{3\alpha-a+1}^a(1-x)^{a-1},
$$
on the LHS of (\ref{elidex}) we get:
\begin{align*}
\text{LHS of (\ref{elidex})}&\to(1-x)^{a-3\alpha}\hypop_{3\alpha-a+1}^a (1-x)^{a-1-3\alpha}\zav{1-\zav{\frac{x}{x-1}}^3}^{-\alpha}\\
&\stackrel{(\ref{Pfaffproperty})}{=}(1-x)^{-3\alpha}\!\! \ _4 F_3\zav{\nadsebou{\frac{a}{3}\quad \frac{a+1}{3}\quad  \frac{a+2}{3}\quad \alpha }{\alpha+\frac{1-a}{3}\quad\alpha+\frac{2-a}{3}\quad \alpha+\frac{3-a}{3} };\zav{\frac{x}{x-1}}^3}.\\
\intertext{Applying the same operator also on the RHS of (\ref{elidex}) yields:}
\text{RHS of (\ref{elidex})}&\to(1-x)^{a-3\alpha}\hypop_{3\alpha-a+1}^a (1-x)^{a-1}\zav{1-3x(1-x)}^{-\alpha}\\
&\stackrel{(\ref{Qt3})}{=}\hypop_{3\alpha-a+1}^{\frac32 \alpha}(y)(1-y)^{-\frac{3\alpha-2a+1}{2}}\hypop_{\frac32 \alpha}^{a}(y)\zav{1-\frac34 y}^{-\alpha}\qquad (y=4x(1-x))\\
&=\hypop_{3\alpha-a+1}^{\frac32 \alpha}(y) (1-y)^{-\frac{3\alpha-2a+1}{2}}\!\! \ _2F_1\zav{\nadsebou{a\quad \alpha}{\frac32 \alpha};\frac34 y}.
\end{align*}
Thus
\begin{equation}\label{semicubic4F3}
(1-x)^{-3\alpha}\!\! \ _4 F_3\zav{\nadsebou{\frac{a}{3}\quad \frac{a+1}{3}\quad  \frac{a+2}{3}\quad \alpha }{\alpha+\frac{1-a}{3}\quad\alpha+\frac{2-a}{3}\quad \alpha+\frac{3-a}{3} };\zav{\frac{x}{x-1}}^3}
\end{equation}
$$
=\hypop_{3\alpha-a+1}^{\frac32 \alpha}(y) (1-y)^{-\frac{3\alpha-2a+1}{2}}\!\! \ _2F_1\zav{\nadsebou{a\quad \alpha}{\frac32 \alpha};\frac34 y},\qquad y:=4x(1-x).
$$
\end{example}
\begin{example}
Putting $3\alpha=2a$ in (\ref{semicubic4F3}) and using (\ref{F1}) we obtain a semi-cubic transform for $F_1$ function!
\begin{equation}\label{F1semicubic}
(1-x)^{-2a}\!\! \ _2 F_1\zav{\nadsebou{\frac{a}{3}\quad \frac{2a}{3}}{\frac{a}{3}+1};\zav{\frac{x}{x+1}}^3}=F_1\zav{\nadsebou{a}{a+1};\nadsebou{\frac12\quad \frac23 a}{-};4x(1-x),3x(1-x)}.
\end{equation}
\end{example}
\begin{example}
Putting $x=\frac12$ into (\ref{F1semicubic}) we get the following summation formula for $\!\! \ _2 F_1(3/4)$:
\begin{equation}\label{2F1(3/4)}
\!\! \ _2 F_1\zav{\nadsebou{a\quad \frac23 a}{a+\frac12};\frac34}=\frac{4^{\frac23} \Gamma\zav{1+\frac13 a}\Gamma\zav{a+\frac12}}{\Gamma\zav{\frac12+\frac13 a}\Gamma\zav{1+a}}.
\end{equation} 
This follows from the identity (\ref{F1(x,1)}) and a well known summation formula
$$
\!\! \ _2 F_1\zav{\nadsebou{ a \quad b}{1-b+a};-1}=\frac{2^{-a}\Gamma(1+a-b)\sqrt{\pi}}{\Gamma\zav{1-b+\frac{a}{2}}\Gamma\zav{\frac{a+1}{2}}}.
$$
See \cite[15.4.26]{dlmf15.4}.

It might be possible to derive the formula (\ref{2F1(3/4)}) from the known summation formula for $\!\! \ _2 F_1(-1/3)$ in \cite[2.8.53]{Erdelyi}, but the author is unaware at the moment whether the two are related or not. 
\end{example}
\bigskip
In a sense, \textit{there is} a change of variable formula for generic function $y$, but only when parameters $a$, $c$ differ by an integer. 
\begin{proposition} Let $y$ be analytic function near the origin such that $y(0)=0$. Then for all $a\in\mathbb{C}$ and for all $n\in\mathbb{Z}$ it holds:
\begin{equation}\label{gensubsder}
\hypop_{a}^{a+n}(x)=\zav{\frac{x}{y}}^{1-a}\zav{\prod_{j=1}^n y'\hypop_{a+j-1}^{a+j}(y)}\zav{\frac{x}{y}}^{a+n-1}.
\end{equation}
\end{proposition}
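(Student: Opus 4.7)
The plan is to exploit the fact that, by (\ref{Hdifop}), the operator $\hypop_{a}^{a+n}(x)$ for integer $n$ reduces to the differential operator $(a+x\partial_x)_n/(a)_n$, so (\ref{gensubsder}) is essentially a statement about a differential operator on one side being conjugated into a differential operator in another variable on the other side. The base case is $n=1$: I would verify directly that
\begin{equation*}
\hypop_{a}^{a+1}(x)=\frac{1}{a}x^{1-a}\partial_x x^a=\zav{\frac{x}{y}}^{1-a}y'\hypop_{a}^{a+1}(y)\zav{\frac{x}{y}}^{a}.
\end{equation*}
This is a short calculation using the chain rule $\partial_x=y'(x)\partial_y$ and the identical representation $\hypop_{a}^{a+1}(y)=\frac{1}{a}y^{1-a}\partial_y y^a$; the factor $y'$ on the right cancels the one coming from the chain rule, and the pre/post multiplications by powers of $x/y$ rebuild $x^{1-a}$ and $x^a$ from $y^{1-a}$ and $y^a$.

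Next, for $n\in\mathbb{N}$ I would factor
\begin{equation*}
\hypop_{a}^{a+n}(x)=\prod_{j=1}^{n}\hypop_{a+j-1}^{a+j}(x),
\end{equation*}
which follows immediately from the differential-operator representation (or from (\ref{exchange}) applied inductively). To each factor I apply the base case, obtaining
\begin{equation*}
\hypop_{a+j-1}^{a+j}(x)=\zav{\frac{x}{y}}^{2-a-j}y'\hypop_{a+j-1}^{a+j}(y)\zav{\frac{x}{y}}^{a+j-1}.
\end{equation*}
Between consecutive factors the multiplication operators $(x/y)^{a+j-1}$ and $(x/y)^{1-(a+j)}$ meet and cancel (both are ordinary functions of $x$, so they commute freely with each other). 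Only the outermost powers survive, giving exactly (\ref{gensubsder}) for positive $n$.

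The case $n=0$ is the trivial identity (with the empty-product convention), and for $n<0$ I would take the inverse of the positive case. Writing $n=-m$ with $m\in\mathbb{N}$ and using $\hypop_{a}^{a-m}(x)=\bigl[\hypop_{a-m}^{a}(x)\bigr]^{-1}$, the already-proved formula applied with base parameter $a-m$ and length $m$ inverts to
\begin{equation*}
\hypop_{a}^{a-m}(x)=\zav{\frac{x}{y}}^{1-a}\prod_{j=1}^{m}\bigl[y'\hypop_{a-j}^{a-j+1}(y)\bigr]^{-1}\zav{\frac{x}{y}}^{a-m-1},
\end{equation*}
which is precisely the right-hand side of (\ref{gensubsder}) once the convention (\ref{negativeproduct}) for $\prod_{j=1}^{-m}$ is unpacked.

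The real content is the base case; everything else is telescoping and bookkeeping. The main thing to be careful about is tracking that the surviving $(x/y)$-powers are multiplication operators (hence commute with one another) while the $\hypop$ and $y'$ factors do not, so that the cancellation in the middle of the product is legitimate. The $n<0$ case requires nothing new but a careful rewriting in accordance with (\ref{negativeproduct}).
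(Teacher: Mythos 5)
Your proof is correct and follows essentially the same route as the paper: both rest on the differential-operator representation (\ref{Hdifop}) together with the chain rule for the positive case, and both obtain negative $n$ by inverting and unpacking the convention (\ref{negativeproduct}). Your factor-by-factor conjugation and telescoping is simply an explicit organization of the computation the paper dismisses as ``easy (though tedious).''
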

\begin{proof} For  $n\in \mathbb{N}$ this is an easy (though tedious) consequence of the formula (\ref{Hdifop}): 
$$
\hypop_{a}^{a+n}(x)=\frac{\zav{a+x\partial_x}_n}{(a)_n},
$$
and the ``change of variable'' formula for derivatives:
$$
x\partial_x=\frac{x}{y} y' y \partial_y.
$$
Once obtain we can invert both sides to get
\begin{equation}\label{gensubsint}
\hypop_{a+n}^{a}(x)=\zav{\frac{x}{y}}^{1-a-n}\zav{\prod_{j=1}^n \hypop_{a+n+1-j}^{a+n-j}(y)\frac{1}{y'}}\zav{\frac{x}{y}}^{a-1}.
\end{equation}
Now rename $a\to a-n$ and we get
$$
\hypop_{a}^{a-n}(x)=\zav{\frac{x}{y}}^{1-a}\zav{\prod_{j=1}^n \hypop_{a+1-j}^{a-j}(y)\frac{1}{y'}}\zav{\frac{x}{y}}^{a-n-1}.
$$
This is exactly the formula (\ref{gensubsder}) for $n=-n$ if we interpret the product as in (\ref{negativeproduct}). Therefore (\ref{gensubsder}) holds for every integer $n$.
\end{proof}
\section{Proof of the main theorem}\label{Sproof}
We are ready to prove Theorem \ref{main}. Let us repeat the statement.

{\bf T\scriptsize HEOREM \normalsize} \ref{main} \emph{ Let 
$$
y=F_m(x):=1-(1-x)^m,\qquad m\in\mathbb{Z}.
$$
Then assuming either
$$
1)\qquad m\in \szav{-2,-1,1,2},\ \forall a,c\in \mathbb{C},\qquad \text{or }\qquad 2)\qquad 
\forall m\in\mathbb{Z}\setminus \szav{0},\ a-c\in\mathbb{Z},  
$$
it holds 
\begin{equation*}
(\ref{Fnsubs})\qquad \hypop_{c}^{a}(x)=
\zav{\frac{mx}{y}}^{1-c}(1-y)^{1+\frac{c-a}{m}}\zav{\prod_{j=1}^{m}(1-y)^{\frac{a-c-1}{m}}\hypop_{c+(j-1)\frac{a-c}{m}}^{c+j\frac{a-c}{m}}(y)}\zav{\frac{mx}{y}}^{a-1}.
\end{equation*}}
\begin{proof}
For $m=1$ we have $F_1(x)=x$ and (\ref{Fnsubs}) trivially holds. 

For $m=-1$ we have $F_{-1}(x)=\frac{x}{x-1}=P(x)$ and (\ref{Fnsubs}) is actually a restatement of the Pfaff property (\ref{Pfaffproperty}).

Cases $m=\pm 2$ follows from Proposition \ref{Qprop} since
$$
F_2(x)=1-(1-x)^2=Q\circ P\circ S_\frac12(x),\qquad F_{-2}(x)=1-\frac{1}{(1-x)^2}=P\circ Q\circ P\circ S_\frac12(x).
$$

What remains is thus to show that the formula (\ref{Fnsubs}) holds for all $m$ when $a-c\in\mathbb{Z}$.
Note that
$$
1-y=(1-x)^{m},\qquad y'=m\zav{1-x}^{m-1}=m\zav{1-y}^{1-\frac{1}{m}}
$$
Thus
\begin{align*}
\hypop_{a}^{a+n}(x)&\stackrel{(\ref{gensubsder})}{=}\zav{\frac{mx}{y}}^{1-a}\zav{\prod_{j=1}^n (1-y)^{1-\frac{1}{m}}\hypop_{a+j-1}^{a+j}(y)}\zav{\frac{mx}{y}}^{a+n-1}.\\
\intertext{Remember, this holds for all integer $n$. Not just positive. We must distinguish two cases depending on the sign of $m$. For $m>0$ we are going to apply the general version of Euler property (\ref{Eulergen}) with $c_j=1-1/m$, $a_j=a+j$ altogether $m-1$ times as in (\ref{Eulergenm}). Note that $c_j=a_j-a_{j-1}-1/m$ so $\alpha=-1/m$. We obtain}
\hypop_{a}^{a+n}(x)&\stackrel{(\ref{Eulergenm})}{=}\zav{\frac{mx}{y}}^{1-a}(1-y)^{\frac{m-1}{m}}\zav{\prod_{j=1}^n (1-y)^{0}\hypop^{a+\frac{j}{m}}_{a+\frac{j-1}{m}}(y)}
\zav{\prod_{k=1}^{m-1}(1-y)^{\frac{n-1}{m}}\hypop_{a+\frac{n}{m}j}^{a+\frac{n}{m}(j+1)}(y)}
\zav{\frac{mx}{y}}^{a+n-1}\\
&\stackrel{(\ref{exchange})}{=}\zav{\frac{mx}{y}}^{1-a}(1-y)^{\frac{m-1}{m}}\hypop_{a}^{a+\frac{n}{m}}(y)
\zav{\prod_{j=1}^{m-1}(1-y)^{\frac{n-1}{m}}\hypop_{a+\frac{n}{m}j}^{a+\frac{n}{m}(j+1)}(y)}
\zav{\frac{mx}{y}}^{a+n-1}\\
&=\zav{\frac{mx}{y}}^{1-a}(1-y)^{\frac{m-n}{m}}
\zav{\prod_{j=1}^{m}(1-y)^{\frac{n-1}{m}}\hypop_{a+\frac{n}{m}(j-1)}^{a+\frac{n}{m}j}(y)}
\zav{\frac{mx}{y}}^{a+n-1}.\\
\intertext{Changing the notation $a\to c$ and $n\to a-c$ we can rewrite the final result as follows:}
\hypop_{a}^{a+n}(x)&=\zav{\frac{mx}{y}}^{1-c}(1-y)^{\frac{m-a+c}{m}}
\zav{\prod_{j=1}^{m}(1-y)^{\frac{a-c-1}{m}}\hypop_{c+\frac{a-c}{m}(j-1)}^{c+\frac{a-c}{m}j}(y)}
\zav{\frac{mx}{y}}^{a-1}.
\end{align*} 
Since the crucial identity (\ref{Eulergenm}) is valid for all integer $n$, this proves (\ref{Fnsubs}) for all $a-c\in\mathbb{Z}$ in the case $m>0$.

For $m<0$ the proof is completely analogous. Starting again with
\begin{align*}
\hypop_{a}^{a+n}(x)&\stackrel{(\ref{gensubsder})}{=}\zav{\frac{mx}{y}}^{1-a}\zav{\prod_{j=1}^n (1-y)^{1-\frac{1}{m}}\hypop_{a+j-1}^{a+j}(y)}\zav{\frac{mx}{y}}^{a+n-1}.\\
\intertext{Now we apply the general version of Euler property (\ref{Eulergen}) with $c_j=1-1/m$, $a_j=a+j$ altogether $1-m$ times as in (\ref{Eulergen-m}). Note that $c_j=a_j-a_{j-1}-1/m$ so $\alpha=-1/m$. We obtain}
\hypop_{a}^{a+n}(x)&\stackrel{(\ref{Eulergen-m})}{=}\zav{\frac{mx}{y}}^{1-a}(1-y)^{\frac{m-1}{m}}\zav{\prod_{j=1}^n (1-y)^{0}\hypop_{a+\frac{j-1}{m}}^{a+\frac{j}{m}}(y)}
\zav{\prod_{j=1}^{1-m}\hypop_{a-\frac{j-2}{m}n}^{a-\frac{j-1}{m}n}(y)(1-y)^{-\frac{n-1}{m}}}
\zav{\frac{mx}{y}}^{a+n-1}\\
&\stackrel{(\ref{exchange})}{=}\zav{\frac{mx}{y}}^{1-a}(1-y)^{\frac{m-1}{m}}\hypop_{a}^{a+\frac{n}{m}}(y)
\zav{\prod_{j=1}^{1-m}\hypop_{a-\frac{j-2}{m}n}^{a-\frac{j-1}{m}n}(y)(1-y)^{-\frac{n-1}{m}}}
\zav{\frac{mx}{y}}^{a+n-1}\\
&\stackrel{(\ref{inverse})}{=}\zav{\frac{mx}{y}}^{1-a}(1-y)^{\frac{m-n}{m}}
\zav{\prod_{j=2}^{1-m}\hypop_{a-\frac{j-2}{m}n}^{a-\frac{j-1}{m}n}(y)(1-y)^{-\frac{n-1}{m}}}
\zav{\frac{mx}{y}}^{a+n-1}\\
&=\zav{\frac{mx}{y}}^{1-a}(1-y)^{\frac{m-n}{m}}
\zav{\prod_{j=1}^{-m}\hypop_{a-\frac{j-1}{m}n}^{a-\frac{j}{m}n}(y)(1-y)^{-\frac{n-1}{m}}}
\zav{\frac{mx}{y}}^{a+n-1}\\
&\stackrel{(\ref{negativeproduct})}{=}\zav{\frac{mx}{y}}^{1-a}(1-y)^{\frac{m-n}{m}}
\zav{\prod_{j=1}^{m}(1-y)^{\frac{n-1}{m}}\hypop^{a+\frac{j}{m}n}_{a+\frac{j-1}{m}n}(y)(1-y)^{-\frac{n-1}{m}}}
\zav{\frac{mx}{y}}^{a+n-1}.\\
\end{align*}
This is exactly the same result as before but for negative $m$. This therefore proves our result (\ref{Fnsubs}) for all integer $m$ and for parameters $a$, $c$ such that $a-c\in\mathbb{Z}$. 

\end{proof}

\begin{example}
We can combine the function $F_n$ with $S_\alpha$ and $M_\alpha$ to obtain additional interesting formulas. For instance, we can recover the following ``cubic'' transform: 
Let
$$
y(x):=F_3\circ S_{\frac32}\circ P\circ S_2(x)=1-\zav{\frac{1-x}{1+2x}}^3=\frac{9x(1-x^3)}{(1-x)(1+2x)^3}.
$$
Then
\begin{eqnarray}\label{conj1eq}
(1+2x)^{a+3c-3}\zav{1-x^3}^{1-c}\hypop_{c}^{a}(x)(1+2x)^{3-3a-c}\zav{1-x^3}^{a-1}=\\
(1-y)^{1-\frac{c}{3}}\hypop_{c}^{\frac{2+a+2c}{3}}(y)(1-y)^{-\frac{c-a}{3}}\hypop_{\frac{2+2c+a}{3}}^{\frac{1+2a+c}{3}}(y)(1-y)^{-\frac{c-a}{3}}\hypop_{\frac{1+c+2a}{3}}^{a}(y)(1-y)^{\frac{a}{3}-1}.\nonumber
\end{eqnarray}
Right now, this formula holds only for $a-c\in\mathbb{Z}$. But granted it is true for all $a-c$, it should be in principle possible to obtain the following well known identity:
$$
\!\!\ _2 F_1\zav{\nadsebou{\frac13\quad \frac23}{1};1-\zav{\frac{1-x}{1+2x}}^3}=(1+2x)
\!\!\ _2 F_1\zav{\nadsebou{\frac13\quad \frac23}{1};x^3},
$$
i.e. the Ramanujan's cubic transform \cite[15.8.33]{dlmf15.8}. But the author is currently unable to do so.
\end{example}
\begin{example}
Also
let
$$
y(x):=F_3\circ S_2\circ P(x)=1-\zav{\frac{1-x}{1+x}}^3=\frac{2x(3+x^2)}{(1+x)^3}.
$$
Then
\begin{eqnarray}\label{conj2eq}
(1-x)^{1-c}\zav{1+x}^{3c+a-3}\zav{1+\frac{x^2}{3}}^{1-c}\hypop_{c}^{a}(x)(1-x)^{a-1}(1+x)^{3-3a+c}\zav{1+\frac{x^2}{3}}^{a-1}\\
=(1-y)^{1-\frac{c}{3}}\hypop_{c}^{\frac{2+a+2c}{3}}(y)(1-y)^{-\frac{c-a}{3}}\hypop_{\frac{2+2c+a}{3}}^{\frac{1+2a+c}{3}}(y)(1-y)^{-\frac{c-a}{3}}\hypop_{\frac{1+c+2a}{3}}^{a}(y)(1-y)^{\frac{a}{3}-1}.\nonumber
\end{eqnarray}
We can verify this formula on a specific functions. 
Applying (\ref{conj2eq}) with $c=-2a$ on the function
$$
(1-y)^{1-\frac{a}{3}},
$$
and then replacing $a\to -a$ we obtain
\begin{equation}\label{2F1cubicerd}
\!\! \ _2 F_1\zav{\nadsebou{a\quad a+\frac13}{2a};\frac{2x(3+x^2)}{(1+x)^3}}=
(1-x)^{1-a}(1+x)^{-3a-3}\zav{1+\frac{x^2}{3}}^{1-2a}\hypop_{2a}^{-a}(x)(1-x)^2\zav{1+\frac{x^2}{3}}^{-a-1}.
\end{equation}
Expanding the term $(1-x)^2$ and performing hypergeometrization we do obtain a cubic transform of $\!\!\ _2 F_1$ which can be found in \cite[(2.11.39)]{Erdelyi}. This is therefore a supporting evidence for validity of Conjecture~\ref{Conjecture}.
\end{example}
\section{Acknowledgment}

The author was supported by the GA\v CR grant no. 21-27941S and RVO funding 47813059.


\begin{thebibliography}{1}
\bibitem{Erdelyi} H. Bateman, A. Erd\'elyi, {\it Higher transcendental functions}, vol. 1, McGraw-Hill Book Co., New York, 1953.
\bibitem{Luke} Y.L. Luke, The special functions and their approximations, Academic Press, 1969. MR0241700 (39 \# 3039).
\bibitem{Bailey} Bailey, W.N.: \textit{Generalized hypergeometric series}, second edition. Cambridge Mathematical Tract, No. 32. Cambridge University Press, Cambridge (1964).
\bibitem{Slater} Slater, L.J.: \textit{Generalized hypergeometric functions.} Cambridge University Press, Cambridge (1966).
\bibitem{Schlosser} Schlosser, M.J.: \textit{Multiple hypergeometric series: Appell series and beyond.} In: Computer Algebra in Quantum Field Theory, pp. 305–324. Springer, Vienna (2013)
\bibitem{Appell}  Appell, P.: \textit{Sur les s\'eries hyperg\'eom\'etriques de deux variables et sur des \'equations diff\'erentielles lin\'eaires aux d\'eriv\'ees partielles.} Comptes rendus hebdomadaires des s\'eances de
l’Acad\'emie des sciences 90, 296-298 \& 731-735 (1880)
\bibitem{Horn} Horn, J.: \textit{Hypergeometrische Funktionen zweier Ver\"{a}nderlichen.} Math. Ann. 105, 381–407
(1931)
\bibitem{Kampe}Kamp\'e de F\'eriet, J.: \textit{La fonction hyperg\'eom\'etrique.} Gauthier–Villars, Paris (1937)
\bibitem{Appell2} P. Appell, J. Kamp\'e de F\'eriet, \textit{Fonctions hyperg\'eom\'etriques et hypersph\'eriques}, GauthierVillars, Paris 1926.
\bibitem{Lauricella} Lauricella, Giuseppe (1893). {\it Sulle funzioni ipergeometriche a pi\`u variabili}. Rendiconti del Circolo Matematico di Palermo (in Italian) 7 (S1): 111–158. doi:10.1007/BF03012437. JFM 25.0756.01.
\bibitem{Karlsson} P.W. Karlsson, H.M. Srivastava, \textit{Multiple Gaussian hypergeometric series}, Wiley 1985.
\bibitem{Saran} Saran, S. \textit{Transformations of certain hypergeometric functions of three variables.} Acta Math. 1955, 93, 293–312.
\bibitem{Saran2}Luo, M.; Xu, M.; Raina, R.K. \textit{On Certain Integrals Related to Saran’s Hypergeometric Function FK}. Fractal Fract. 2022, 6, 155. \url{https://doi.org/10.3390/fractalfract6030155}
\bibitem{Gelfand} Gelfand, I.M., Kapranov, M.M., Zelevinsky, A.V.: \textit{Hypergeometric functions and toric
manifolds.} Funct. Anal. and its Appl. 23, 94–106 (1989).
\bibitem{Beukers} Beukers, F.: \textit{Algebraic A-hypergeometric functions.} Invent. Math. 180, 589–610 (2010)
\bibitem{Beukers2} Beukers, F. \textit{Monodromy of A-hypergeometric functions}. Journal für die reine und angewandte Mathematik, vol. 2016, no. 718, 2016, pp. 183-206. https://doi.org/10.1515/crelle-2014-0054
\bibitem{macdonald}
 I. G. Macdonald (1995). \textit{Symmetric Functions and Hall Polynomials.} 2nd edition, The Clarendon Press, Oxford University Press, New York-Oxford.
\bibitem{Blaschke5} P. Blaschke:\textit{Matrix calculus and related hypergeometric functions}, Integral Transforms and Special Functions,2019, 30:9, 743-773, DOI: 10.1080/10652469.2019.1617290
\bibitem{Shpot} Shpot, M.A.:\textit{ A massive Feynman integral and some reduction relations for Appell functions.} J. Math. Phys. 48 (12), 123512, 13pp. (2007).
\bibitem{dlmf8}
G. Wei and B. E. Eichinger (1993). \textit{Asymptotic expansions of some matrix argument hypergeometric functions, with applications to macromolecules}. Ann. Inst. Statist. Math. 45 (3), pp. 467?475.
\bibitem{Exton1} H. Exton,\textit{ Multiple hypergeometric functions and applications}, Wiley, 1976
\bibitem{Exton2} H. Exton: \textit{On a hypergeometric function of four variables with a new aspect of SL-symmetry}, Ann. Mat. Pura Appl. 161 (1992), 315–343.
\bibitem{englis} M. Engli\v s, E. Youssfi,\textit{ M-harmonic reproducing kernels on the Ball}, preprint. \url{https://arxiv.org/pdf/2208.07358.pdf} 
\bibitem{Carlson2}  Carlson, B.C.: \textit{Quadratic transformations of Appell functions.} SIAM J. Math. Anal. 7,
291–304 (1976)
\bibitem{B2} P. Blaschke: {\it Berezin transform on harmonic Bergman spaces on the real ball}, J. Math. Anal. Appl. 411 (2014), no. 2, 607-630.
%\bibitem{Blaschke4} P. Blaschke: \emph{Asymptotic analysis via calculus of hypergeometric functions}, J. Math. Anal. Appl. 433 (2016), no. 2, 1790-1820.
\bibitem{B7} P. Blaschke: {\it Hypergeometric form of Fundamental theorem of calculus}, preprint, \url{https://arxiv.org/abs/1808.04837}
\bibitem{Carlson} Carlson, B. C., Shaffer, D. B.: Starlike and prestarlike hypergeometric functions. SIAM J. Math. Anal. 159, 737–745 (1984) MR0747433 (85j:30014) 
\bibitem{dlmf15.8} \url{https://dlmf.nist.gov/15.8}
\bibitem{dlmf16.6} \url{https://dlmf.nist.gov/16.6}

\bibitem{dlmf15.4} \url{https://dlmf.nist.gov/15.4}

%\bibitem{dlmfeq2} \url{http://dlmf.nist.gov/16.2}
%\bibitem{dlmfeq} \url{http://dlmf.nist.gov/16.11}
%
%
%\bibitem{dlmf15.12} \url{https://dlmf.nist.gov/15.12}

\end{thebibliography}
\end{document}